\documentclass{amsart}
\usepackage{SSZ}
\usepackage{categorytheory}
\fullpage

\begin{document}

\title{A combinatorial construction of homology via ACGW categories}
\author[Sarazola]{Maru Sarazola}
\address{School of Mathematics, University of Minnesota, Minneapolis MN, 55455}
\email{maru@umn.edu}
\author[Shapiro]{Brandon T. Shapiro}
\address{Kerchof Hall, 141 Cabell Dr, Charlottesville, VA 22903}
\email{brandonshapiro@virginia.edu}
\author[Zakharevich]{Inna Zakharevich}
\address{Malott Hall, Cornell University, Ithaca, NY 14850}
\email{zakh@math.cornell.edu}
\maketitle

2-Segal spaces arise not only from $S_\dotp$-constructions associated to Waldhausen and (proto) exact categories, but also from $S_\dotp$-constructions associated to certain double-categorical structures. A major step in this direction is due to the work of Bergner--Osorno--Ozornova--Rovelli--Scheimbauer \cite{BOORS:2016}, who propose augmented stable double Segal objects as a natural input for an $S_\dotp$-construction. More recently, another such input has been put forth: ACGW categories. ACGW categories have the advantage that they are combinatorial in nature (as opposed to homotopical or algebraic), and thus have fewer difficult coherence issues to work with. The goal of this paper is to introduce the reader to the key ideas and techniques for working with ACGW categories. To do so, we focus on how homology theory generalizes to ACGW categories, particularly in the central example of finite sets. We show how the ACGW formalism can be used to produce various classical homological algebra results such as the Snake lemma and long exact sequences of relative pairs.

\section{Introduction}

In order to extend the notion of homology beyond algebraic contexts, it is
important to consider exactly what makes homology possible.  The standard
definition of the homology of a chain complex is
\[H_n = \ker d_n/\im d_{n+1}.\] Thus in order to define homology, we must at
least have images, kernels, and quotients.  
In an algebraic setting this is always possible in the context of an
\emph{abelian category}.  To go beyond this it is necessary to analyze exactly
how finite sets fail to be abelian.\footnote{This is somewhat akin to asking
  ``how is a raven like a writing desk?''}  A function between finite sets
always has an image, and quotients between finite sets exist.  If we add a
basepoint to our sets, it is tempting to say that the kernel of a map is the
preimage of the basepoint.  With these definitions it is possible to define
short exact sequences, and thus it may be tempting to try and define chain
complexes of finite sets in the following manner.

\begin{definition}
  A \emph{chain complex of finite sets} is a sequence $C_n$ of finite pointed
  sets, together with functions $d_n: C_n \rto C_{n-1}$, satisfying the
  condition that $d_nd_{n+1}$ is the constant map at the basepoint for all $n$.
  We write such a chain complex as $(C_*,d^C)$.

  A \emph{naive map of chain complexes of finite sets}
  $(C_*,d^C) \rto (D_*,d^D)$ is a sequence of functions $f_n: C_n \rto D_n$ such
  that $d^D_nf_n = f_{n-1}d^C_n$ for all $n$.

  The category of chain complexes of finite sets and naive maps between them is
  denoted $n\Ch_\FinSet$.
\end{definition}

This intuitive definition 
has several
downsides, all coming from the fact that the \emph{kernel} of a surjective map
does not satisfy the kinds of properties that we would like.  For example, it
should intuitively be the case that if there is a surjective map $f:A \rfib B$
then $|A| = |B| + |\ker f|$.  If $f$ is injective away from the basepoint this
is true, but otherwise it is not.  



In a forthcoming paper, Sarazola--Shapiro suggest defining ``surjective'' maps
somewhat differently:
\begin{definition}
  An \emph{admissible monomorphism} between chain complexes of finite sets is a
  naive map of chain complexes of finite sets which is levelwise injective.
  
  An \emph{admissible epimorphism} between chain complexes of finite sets is a
  naive map of chain complexes of finite sets which is, at each level, bijective away from the basepoint (and consequentially surjective).

  The category of chain complexes of finite sets and morphisms which can be
  factored as an admissible epimorphism followed by an admissible monomorphism
  is denoted
  $a\Ch_\FinSet$.  The morphisms of this category are called \emph{admissible
    morphisms}. 
\end{definition}
With this definition, they construct a $K$-theory spectrum for $n\Ch_\FinSet$ using only the short exact sequences from $K(a\Ch_\FinSet)$ and show that it agrees with $K(\FinSet)$.  The key observation here is that it is admissible epimorphisms rather than arbitrary surjective maps between chain complexes that play a role alongside monomorphisms in the construction of $K$-theory.


However, in order to prove this theorem it is necessary that arbitrary
surjective maps are allowed to appear as differentials inside the chain
complex.  This means that while it is possible to define the kernel of a map
\emph{between} chain complexes, it is not possible to define the kernel of a
\emph{differential}.   In order to make kernels possible, we must also restrict
the differentials.

\begin{definition}
  An \emph{injective chain complex of finite sets} is a chain complex of finite
  sets in which the differentials are injective away from the basepoint.

  The full subcategory of injective chain complexes inside $a\Ch_{\FinSet}$ is
  denoted $i\Ch_{\FinSet}$.
\end{definition}

It turns out that many of the proofs in homological algebra rely on a simple
duality structure: that in an abelian category there is, for every object $A$, a
natural bijection between injections into $A$ and surjections out of $A$.  Thus
any result that relies only on this observation should be generalizable into a
context where this is the case.  This is the notion of an \emph{ACGW category},
originally defined in \cite{CZ-cgw} and further developed in \cite{SS-cgw}.


\begin{theorem}\label{thm:main}
  Let $\bbA$ be an ACGW category, and write $\CCh_\bbA$ for the ACGW category of
  chain complexes over $\bbA$.  Then for all $n$ there exists a functor
  $H_n: i\Ch_\bbA \rto \bbA^\flat$ (where $\bbA^\flat$ is a certain ordinary
  category associated to $\bbA$); see Proposition~\ref{prop:homologyonmaps} and   Theorem~\ref{thm:functorialhomology}.  Moreover, the functors $H_n$ satisfy
   standard theorems of homology theories: the Snake Lemma
  (Theorem~\ref{thm:snakelemma}), and the Long Exact Sequence of a relative pair
  (Theorem~\ref{thm:lesforhomology}).
\end{theorem}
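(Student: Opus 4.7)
The plan is to establish each of the cited results in sequence. First I would construct $H_n$ on objects by mimicking the formula $H_n = \ker d_n / \im d_{n+1}$ inside the ACGW setting: working in $i\Ch_\bbA$ guarantees that $\ker d_n$ exists, the image of the admissible differential $d_{n+1}$ can be formed using the mono-epi factorization in $\bbA$, and the condition $d_n d_{n+1} = *$ forces $\im d_{n+1}$ to be a subobject of $\ker d_n$. The quotient, taken via the ACGW duality between admissible monos and epis, lives in the underlying ordinary category $\bbA^\flat$.

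For functoriality, I would extend $H_n$ to admissible morphisms, which in an ACGW category are spans of an admissible mono and an admissible epi. The ACGW axioms encoding compatibility of mono-epi factorizations with pullbacks and pushouts should yield that both legs of such a span descend coherently to kernels and images, producing a well-defined morphism on homology. For the Snake Lemma, given a short exact sequence of injective chain complexes, the central construction is the connecting morphism $\delta: H_n(C) \rto H_{n-1}(A)$, built as a zigzag of admissible maps: lift a cycle from $C_n$ across the epi $B_n \rto C_n$, apply $d_n^B$, and factor the result through $A_{n-1}$ using exactness at $B_{n-1}$. The Long Exact Sequence of a relative pair then follows by applying the Snake Lemma to the short exact sequence relating a subcomplex, its ambient chain complex, and the quotient, splicing the resulting snakes into a long exact sequence.

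The main obstacle I anticipate is the absence of elements to chase. Every step of the classical Snake Lemma argument --- defining $\delta$, proving its independence of the choices made along the zigzag, and verifying exactness at each position --- must be translated into a manipulation of admissible spans and pullback/pushout squares, and justified directly from the ACGW axioms rather than by pointwise reasoning. Similarly, showing functoriality on admissible morphisms (rather than on ordinary underlying maps) requires care with the span representation of morphisms and the behavior of kernels and images under composition of spans. Once the connecting morphism is constructed and shown to be natural, the individual exactness claims and their assembly into a long exact sequence should be comparatively routine applications of the ACGW duality.
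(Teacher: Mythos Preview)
Your outline is plausible but diverges from the paper in two structural ways that are worth flagging.

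First, the order of dependencies is inverted. The paper proves the Snake Lemma \emph{first}, derives the Long Exact Sequence from it, and only then defines $H_n$ on morphisms by \emph{using} the LES: a span $X_\dotp \elto Z_\dotp \mrto Y_\dotp$ yields two short exact sequences $X_\dotp \bbackslash Z_\dotp \mrto X_\dotp \elto Z_\dotp$ and $Z_\dotp \mrto Y_\dotp \elto Y_\dotp \fforwardslash Z_\dotp$, and the induced map on homology is read off as the composite of two spans extracted from the two long exact sequences. Functoriality is then checked afterward. Your plan of first building $H_n$ on morphisms directly from compatibility of kernels and images, and then proving the Snake Lemma, is the more traditional ordering, but it forgoes this shortcut and forces you to do by hand the ``descent to kernels and images'' that the LES packages for free.

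Second, and more substantively, the connecting morphism in the Snake Lemma is \emph{not} built as a zigzag mimicking the classical lift--differentiate--factor argument. The paper observes that, in the ACGW setting, the connecting object has a closed-form description: given the middle column $X \mrto Y \elto Z$, one simply takes $W = (Y \fforwardslash X) \bbackslash Z \cong (Y \bbackslash Z) \fforwardslash X$, which is literally the homology of that three-term sequence. The maps into $C \bbackslash Z$ and $A' \fforwardslash X$ fall out of iterated kernel/cokernel squares, and exactness is automatic from the construction. This sidesteps entirely the ``independence of choices'' issue you anticipate, because there are no choices---the complement operations are functorial on the nose. Your proposed translation of the diagram chase into span manipulations would likely work, but it is the hard way around; the paper's point is precisely that the ACGW duality makes the connecting map a one-line definition rather than a construction.
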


Moreover, the definition of homology yields a natural definition of
quasi-isomorphism (see Section~\ref{subsection:quasiiso}).

When $\bbA$ is the ACGW category associated to an abelian category $\A$,
$\bbA^\flat = \A$, so the theorem recovers the standard homological algebra results. The ACGW category $\FFinSet$ has its associated category of chain complexes equivalent to $i\Ch_\FinSet$, and $\FFinSet^\flat$ is equivalent to the category of
finite pointed sets and functions which are injective away from the basepoint.

The goal of this paper is to prove the above theorem by largely focusing on the
case of finite sets.  Although all of our proofs will hold in any ACGW category, all of the intuition and illustrations will use finite sets. For instance, the Snake Lemma in the case of $\FFinSet$ can be visualized using the following pair of pictures.
\[
\raisebox{-.15\height}{\includegraphics[height=4cm]{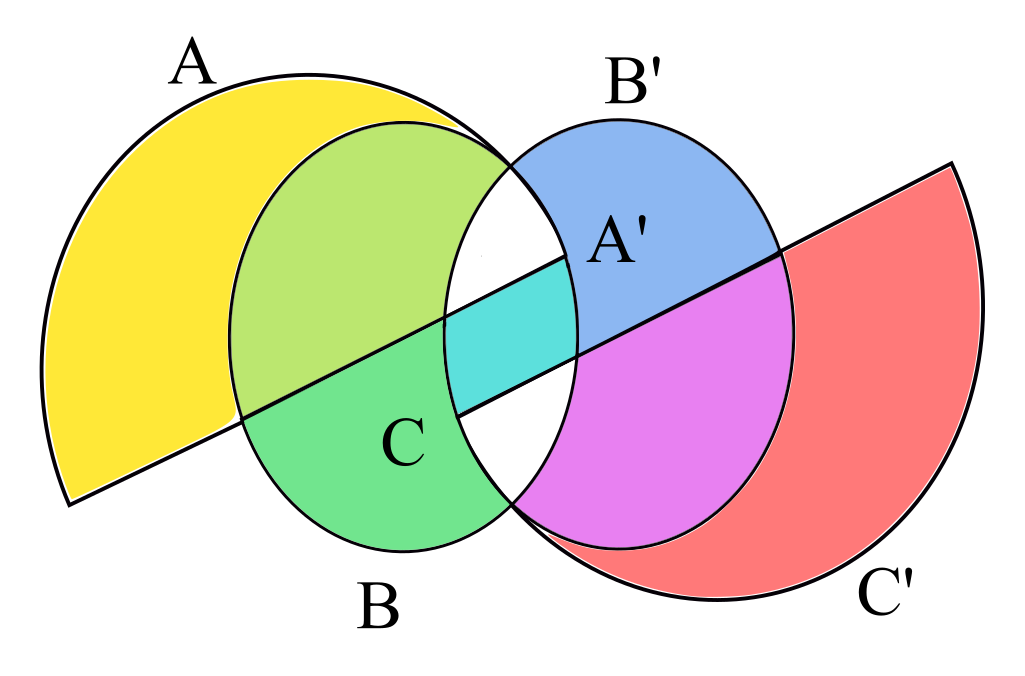}}
\qquad\quad
\includegraphics[height=3cm]{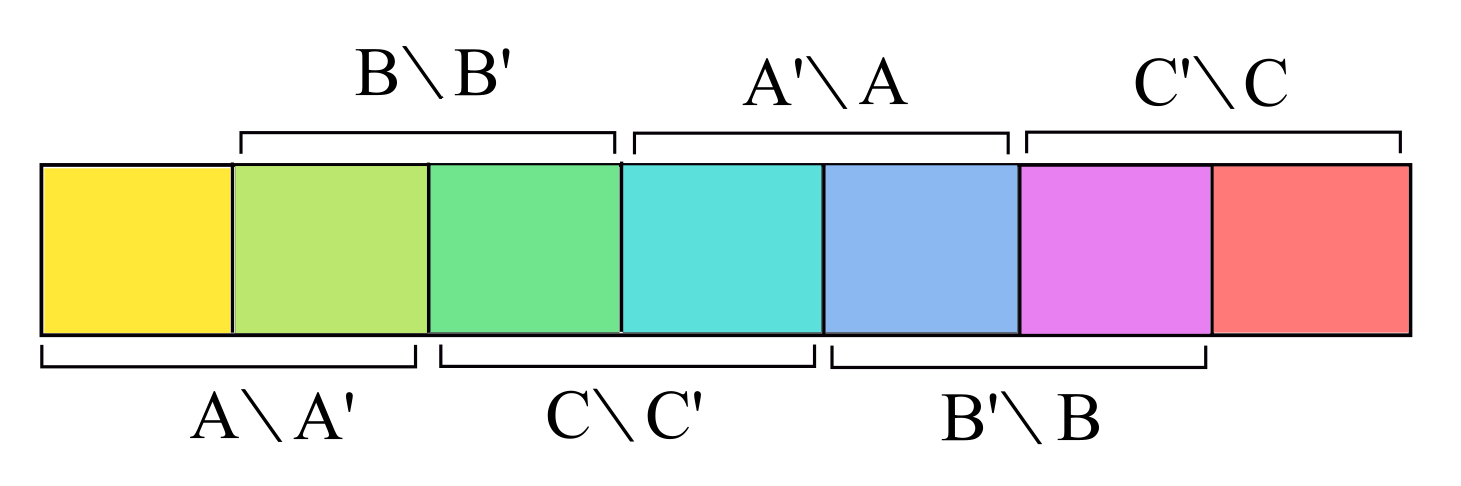}
\]

\subsection*{Organization} This paper is organized as follows.  In Section
\ref{section:ACGWcats} we explain the working features of an ACGW category, and
explain the ACGW structure associated to finite sets.  In Section
\ref{section:homologydefn} we define chain complexes over an ACGW category and construct
the homology functors $H_i$, and explain them in detail in the special case of finite
sets.  In Section \ref{section:results} we prove all the components of Theorem \ref{thm:main}.

\subsection*{Acknowledgements} The authors would like to thank the Banff
International Research Station for hosting the workshop ``Higher Segal Spaces and their Applications to Algebraic $K$-Theory, Hall Algebras, and Combinatorics''.
We would also like to thank Shruthi Sridhar-Shapiro for making the illustrations in this
paper for us.  Zakharevich was supported in part by a Simons Foundation
Fellowship, and by NSF CAREER DMS-1846767.

\section{ACGW categories}\label{section:ACGWcats}

In this section we describe ACGW categories and their properties.  Their aim is
to axiomatize several features common to both algebraic categories (where we
have access to a notion of (co)kernels) and to combinatorial categories (where
we have a notion of complements). As a consequence, their definition can be
quite technical, especially for readers who are not already acquainted with the
language of double categories. We do not give a complete, formal
definition of ACGW categories. Instead, we present an intuitive approach that
only highlights the essential features, with an emphasis on the examples of
finite sets and finite pointed sets. A thorough reader may find the definition in \cite[Definition
5.6]{CZ-cgw}; generalizations for the interested reader can be found in
\cite[Definition 2.5]{CZ-cgw} and \cite[Definitions 2.4, 4.1]{SS-cgw}.

A double category is a collection of objects with \emph{two} different
category structures between them, as well as data about how these two different
category structures relate to one another.  


\begin{definition}
    A \emph{double category} 
    $\bbA$ consists of objects $A, B, A', B',\dots$, horizontal morphisms
    $A\mrto B$, vertical morphisms $A\erto A'$, and \emph{pseudo-commutative squares}
    \begin{diagram}
    { A & B \\
      A' & B' \\};
      \mto{2-1}{2-2} \eto{1-2}{2-2} \mto{1-1}{1-2} \eto{1-1}{2-1} \labar{1-1}{2-2}{\circlearrowleft}
    \end{diagram}
together with associative and unital compositions for horizontal
morphisms, vertical morphisms, and squares. 

While in general a pseudo-commutative\footnote{In the literature these
  are more commonly called ``$2$-cells'' or simply ``squares''.} square is not necessarily uniquely
determined by the morphisms on its boundary, in all of our
examples each diagram of the above shape either \emph{is} a square in the double category structure, or it is \emph{not} a square in the double category structure.  
\end{definition}

\begin{notation}
Given objects $A,B$ in a double category $\bbA$, we denote by $\Hor(A,B)$ the category whose objects are the horizontal morphisms $A\mrto B$ in $\bbA$, and whose morphisms are given by squares in $\bbA$. Similarly, we use $\Ver(A,B)$ for the category of vertical morphisms $A\erto B$ and squares in $\bbA$.
\end{notation}

\begin{remark}
  We will use blackboard bold for double categories and ACGW categories, and
  ordinary bold or caligraphic font for ordinary categories.  Thus $\FinSet$ and
  $\C$ are ordinary categories, but $\FFinSet$ and $\bbA$ are double categories.
\end{remark}

\begin{definition}\label{defn:ACGW}
An \emph{ACGW category} is a double category  $\bbA$ satisfying the
following extra conditions:
\begin{itemize}
    \item There is an object $\varnothing\in\bbA$ which is initial both
      for the horizontal and the vertical morphisms, and both types of morphisms
      are monic.
    \item For every object $A\in \bbA$ there is a pair of inverse bijections
      \[c: \bigcup_{B\in \bbA} \Hor(B,A) \rlto \bigcup_{B\in \bbA} \Ver(B,A)
        \ :\!k.\]
      The function $c$ is the ``cokernel,'' or ``complement,'' and the function
      $k$ is the ``kernel.''  For a horizontal morphism $A \mrto B$ we write $B
       \fforwardslash  A \erto B$ for its cokernel, and for a vertical morphism $A \erto B$ we
      write $B \bbackslash A \mrto B$ for its kernel.
    \item The pair above is natural in the following sense.  For every
      vertical morphism $B \erto D$, the
      bijection $c$ extends to a bijection
      \begin{equation} \label{eq:squareduality}
        c:\left\{\csqinline{A}{B}{C}{D}{}{}{}{}\right\} \rto \left\{\mathrm{pullbacks}\ 
          \begin{inline-diagram}
            {X & B \\ Y & D \\};
            \eto{1-1}{1-2} \eto{2-1}{2-2} \eto{1-1}{2-1} \eto{1-2}{2-2}
          \end{inline-diagram} \right\}
      \end{equation}
      An analogous statement holds for $k$ for any horizontal morphism $C \mrto
      D$.
      
      As a consequence of this, given any diagram
      \[A \mrto C \elto B\]
      there exists a unique (up to unique isomorphism) ``pullback
      pseudo-commutative square'' 
      \[\csq{A \oslash_C B}{B}{A}{C}{}{}{}{}.\]

\end{itemize}
\end{definition}

Again, we emphasize that this definition is only an intuitive one. Formally, the assignments $c$ and $k$ are part of the data of an ACGW category, and we need to be slightly more careful when describing their codomain categories, something we glossed over here. These functors are also required to be equivalences of categories. Finally, there are a host of additional axioms one can ask for, that allow us to encode notions like pushouts/pullbacks, image factorizations, weak equivalences, and so on; these give rise to more specialized variants of ACGW categories that we will not discuss here, but that the reader can find in \cite{CZ-cgw, SS-cgw}.   

Since our goal is to present several classical homological algebra results using the language of ACGW categories, we introduce a combinatorial example that will serve as one source of intuition.

\begin{example}
  The ACGW category $\FFinSet$ has
  \begin{description}
  \item[objects] finite sets,
  \item[horizontal morphisms] injections,
  \item[vertical morphisms] injections, and
  \item[pseudo-commutative squares] squares which are pullbacks when considered
    as a diagram in $\FinSet$.
  \end{description}
  
  The object $\varnothing$ is the empty set.  Both $c$ and $k$ will be the same:
  for any injection $A \rto B$ we take the inclusion $B \backslash A \rto B$
  (i.e. the inclusion of the complement).  Then the duality between
  pseudo-commutative squares and pullback squares states that, if
  $B, C \subseteq D$ then
  \[B \cap C = B \backslash \left((D \backslash C) \cap B\right).\]
  The construction $\oslash$ is simply the pullback in the ordinary category $\FinSet$. 

  
       All of the relevant squares in this double category can be visualized using pictures such as those below. For each picture on the left, we include on the right the various types of diagrams in the ACGW category of sets it could correspond to.
    \begin{equation}
    \raisebox{-.5\height}{\includegraphics[width=4cm]{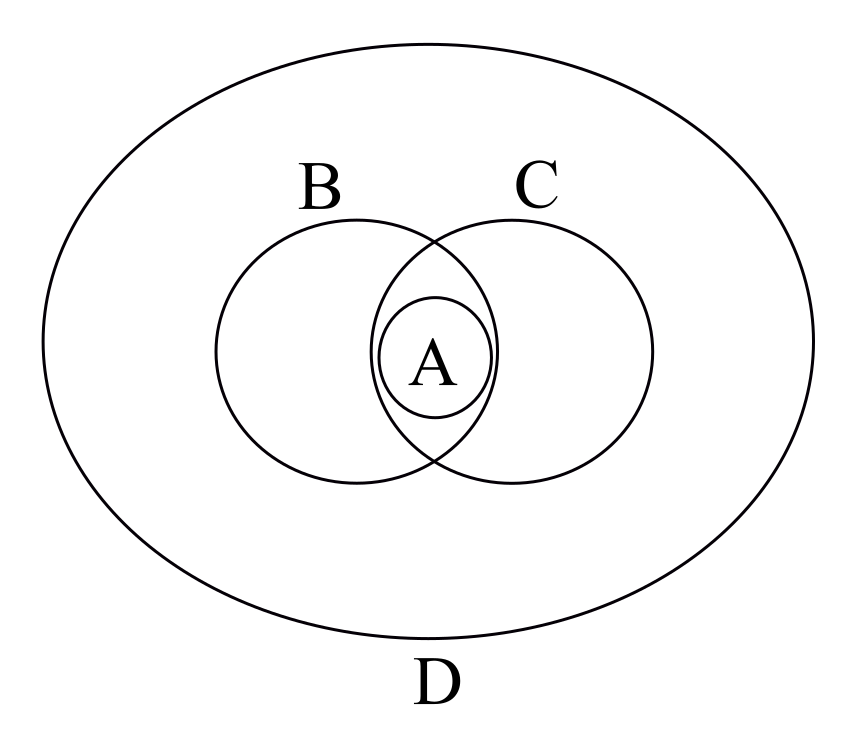}}
    \qquad\qquad
    \begin{tikzpicture}[baseline]%
      \matrix (m) [matrix of math nodes,row sep=2.6em, column sep=2.8em]
      { A & B \\ C & D \\};
      \mto{1-1}{1-2} \mto{1-2}{2-2} \mto{1-1}{2-1} \mto{2-1}{2-2}
    \end{tikzpicture}
    \qquad
    \begin{tikzpicture}[baseline]%
      \matrix (m) [matrix of math nodes,row sep=2.6em, column sep=2.8em]
      { A & B \\ C & D \\};
      \eto{1-1}{1-2} \eto{1-2}{2-2} \eto{1-1}{2-1} \eto{2-1}{2-2}
    \end{tikzpicture}
  \end{equation}
    In a commuting square of inclusions (of either arrow type), the set $A$ includes into the intersection of $B$ and $C$ but may not cover all of $B \cap C$.
    \begin{equation}\label{eqn.setpullback}
    \raisebox{-.5\height}{\includegraphics[width=4cm]{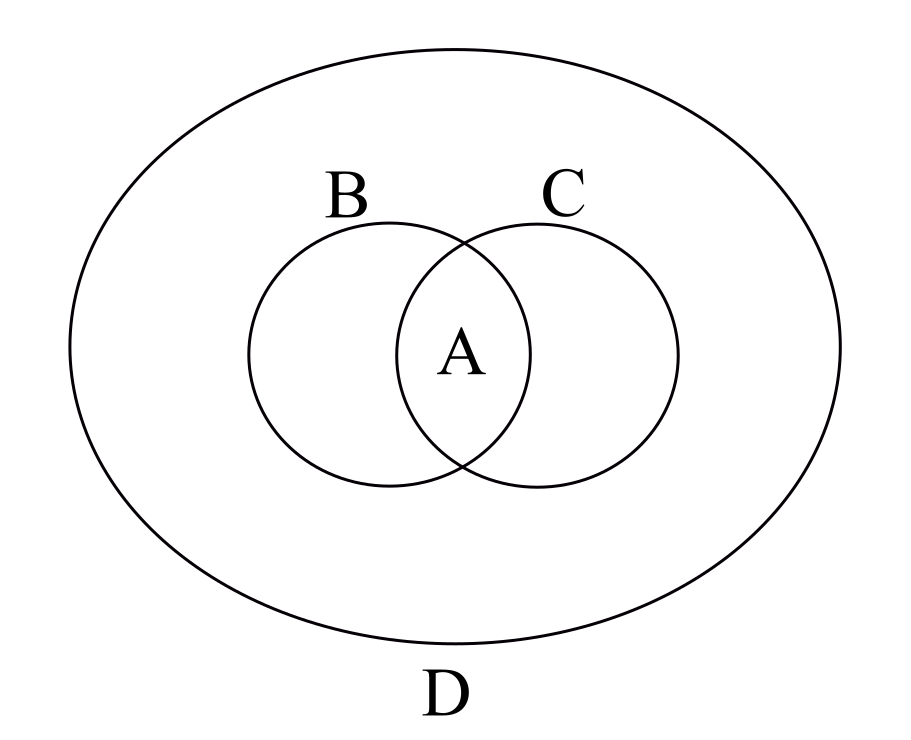}}
    \qquad\qquad 
    \csq{A}{B}{C}{D}{}{}{}{}
    \qquad
    \mpbsq{A}{B}{C}{D}{}{}{}{}
    \qquad
    \epbsq{A}{B}{C}{D}{}{}{}{}
    \end{equation}
    In a cartesian square (be it of horizontal morphisms, vertical morphisms, or a mix) we moreover have $A = B \cap C$.

    For the pseudo-commutative square in \eqref{eqn.setpullback}, its  cartesian complement square in the horizontal direction models the intersection of $B$ and $D \backslash C$ in $D$.
    \begin{equation}
    \raisebox{-.5\height}{\includegraphics[width=4cm]{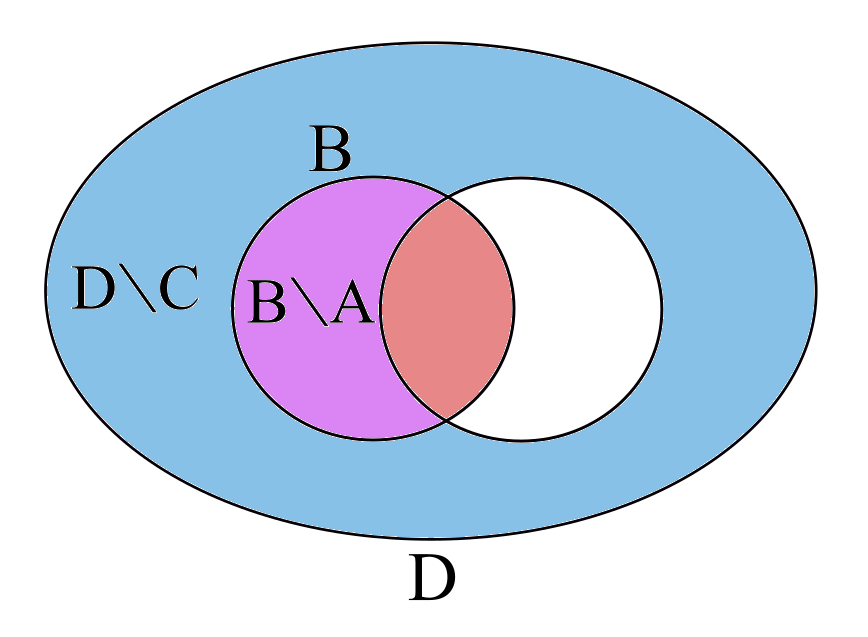}}
    \qquad\qquad\qquad
    \epbsq{B \backslash A}{B}{D \backslash C}{D}{}{}{}{}
    \end{equation}
    In this picture $B$ is shaded red, $D \backslash C$ shaded blue, and their intersection $B \backslash A$ shaded purple.
\end{example}

\begin{remark}
    The primary references \cite{CZ-cgw,SS-cgw} focus on the example of finite sets, as they are concerned with $K$-theory, which requires finiteness.  By contrast, nothing in this paper requires finiteness, so all results about $\FFinSet$ are equally true in the ACGW category $\mathbb{S}\mathbf{et}$; in the interest of being faithful to our sources we stay with the finite set example.
\end{remark}

\begin{example}
    Any abelian category $\mathcal A$ gives rise to an ACGW category in a natural way. The double category $\mathbb{A}$ to consider has as objects the objects of $\mathcal{A}$, as horizontal morphisms the monomorphisms, and as vertical morphisms the opposite of the epimorphisms; that is, $\mathbb{A}$ has a vertical morphism $A\erto B$ for each epimorphism $B\twoheadrightarrow A$ in $\mathcal{A}$. Squares are given by the commutative squares in $\mathcal{A}$; that is, the diagram in $\mathbb{A}$ as below left is a square in the double category precisely when the corresponding diagram in $\mathcal{A}$ as below right is commutative. 
    \begin{diagram}
        {A & B & \qquad & A & B\\
        C & D & \qquad & C & D\\};
        \mto{1-1}{1-2} \mto{2-1}{2-2} \mto{1-4}{1-5} \mto{2-4}{2-5} \eto{1-1}{2-1} \eto{1-2}{2-2} \diagArrow{->>}{2-4}{1-4} \diagArrow{->>}{2-5}{1-5}
    \end{diagram}
   The object $\varnothing$ is the zero object, and the functors $c$ and $k$
   consist of taking cokernels and kernels, respectively. 
\end{example}

\begin{example}
  The ACGW category $\FFinSet_\ast$ has
  \begin{description}
  \item[objects] finite pointed sets,
  \item[horizontal morphisms] injections,
  \item[vertical morphisms] surjections which are injective away from the
    preimage of the basepoint, and
  \item[pseudo-commutative squares] squares which are commutative when
    considered in $\FinSet_\ast$ (analogously to the abelian case).
  \end{description}
  The vertical morphisms are precisely the cokernels of inclusions, which send
  the image of an inclusion to the basepoint. Dually, the kernel of such a
  surjection is the inclusion of the preimage of the basepoint.
  
  The ACGW category $\FFinSet_\ast$ is isomorphic to the ACGW category
  $\FFinSet$ via the double functor $\FFinSet \rto \FFinSet_\ast$ taking a set
  $A$ to $A_+$ (adding a disjoint basepoint).  On horizontal morphisms this
  simply takes a function $f$ to $f_+$.  On vertical morphisms it takes an
  injection $g:I \rto J$ to the surjection $\widetilde g: J_+ \rto I_+$ given by
  $\widetilde g(j) = g^{-1}(j)$ if $j$ is in the image of $g$, and $g(j) = *$
  otherwise.  This isomorphism is what motivates much of the intuition in the
  unpointed construction.
\end{example}

There is a natural way to assign an ordinary category to any ACGW category.

\begin{definition}
  Given an ACGW category $\bbA$, there is an ordinary category which is a
  \emph{flattening} of it, denoted $\bbA^\flat$.  This category has
  \begin{description}
  \item[objects] the objects of $\bbA$,
  \item[morphisms] from $A$ to $B$ are (isomorphism classes of) diagrams
    \[A \elto \bar A \mrto B,\]
  \item[composition] of morphisms $A \elto \bar A \mrto B$ with $B \elto \bar B \mrto C$
    given by the composition
    \[A \elto \bar A \oslash_B \bar B \mrto C.\]
    In other words, we take a category of spans with ``legs in different
    directions'' and use the existence of $\oslash$ to compose.
  \end{description}

  Every morphism $A \elto \bar A \mrto B$ in $\bbA^\flat$ has a \emph{kernel}
  (given by the kernel of $\bar A \erto A$) and an \emph{image}, given by
  $\bar A$.  A \emph{zero morphism}, denoted $0$, is the unique morphism where
  $\bar A = \varnothing$; there is a zero morphism between any two objects, and
  the composition of any morphism with a zero morphism is a zero morphism.
\end{definition}

If $\bbA$ is the ACGW category associated to an abelian category $\A$, then $\bbA^\flat \cong \A$---this is essentially the epi-mono factorization of morphisms in $\A$.  


There is an important difference between the ACGW category associated to an
abelian category and the ACGW category $\FFinSet$.  Morphisms in $\FinSet$ have
associated epi-mono factorizations, so it may be tempting to try and define an
ACGW structure where the vertical morphisms correspond to opposite surjections.
However, a problem arises: there is no longer a bijection between surjections
out of $A$ and injections into $A$---there are far more surjections---unlike in
the abelian case, where the bijection is completely natural.  

If we want a class
of surjections out of a finite pointed\footnote{Without a basepoint, this is not
  possible at all} set $A$ to be in bijection with injections into $A$, we must
take those surjections which are injective away from the basepoint.  The
category of these is exactly the category of injections between finite sets,
which explains the ACGW structure we chose for $\FFinSet_\ast$. From this we immediately get that $\FFinSet_\ast^\flat$ is the category of finite
pointed sets and morphisms which are injective away from the basepoint. $\FFinSet^\flat$ is the equivalent category of finite sets and (isomorphism classes of) spans of monomorphisms $A \elto \bar A \mrto B$, where the corresponding map of pointed sets $A_+ \to B_+$ sends $A \backslash \bar A$ to the basepoint.

\section{Homology in an ACGW category}\label{section:homologydefn}

Our goal is to use the duality structure present in the definition of an
ACGW category in order to define chain complexes and homology.  

\begin{definition}
  Let $\bbA$ be an ACGW category.  A \emph{chain complex in $\bbA$} is a
  sequence of morphisms $d_i: A_i \rto A_{i-1}$ in $\bbA^\flat$ such that
  $d_id_{i+1}=0$ for all $i$.
\end{definition}

Unwinding this definition, a chain complex consists of a diagram in $\bbA$ of the form
\begin{diagram}
    {\cdots X_{i+1} & \bar X_{i+1} & X_i & \bar X_i & X_{i-1} \cdots\\};
    \eto{1-2}{1-1} \mto{1-2}{1-3} \eto{1-4}{1-3} \mto{1-4}{1-5}
\end{diagram}
along with squares in $\bbA$ of the form below.
\[
\csq{\varnothing}{\bar X_i}{\bar X_{i+1}}{X_i}{}{}{}{}
\]

\begin{example}
  Suppose that $\bbA$ is the ACGW category associated to an abelian category
  $\A$.  Then $\bbA^\flat = \A$.  Moreover, the zero morphism in $\bbA^\flat$ is
   the zero morphism in $\A$, so the definition is exactly the usual
  definition of a chain complex in $\A$.
\end{example}

\begin{example}
  In the case of $\FFinSet_\ast$, a chain complex is an injective chain complex of finite sets, where similarly the chain condition asserts that the composition of two differentials is constant at the basepoint.  
\end{example}

\begin{example}
  A chain complex in $\FFinSet$ has the form of a sequence of spans of inclusions.
  \begin{diagram}
    {\cdots X_{i+1} & \bar X_{i+1} & X_i & \bar X_i & X_{i-1} \cdots\\};
    \eto{1-2}{1-1} \mto{1-2}{1-3} \eto{1-4}{1-3} \mto{1-4}{1-5}
  \end{diagram}
  While by the equivalence between $\FFinSet^\flat$ and $\FFinSet_\ast^\flat$ this is the same information as a chain complex in $\FFinSet_\ast$, we can reason about it using different techniques. 
  
  For instance, consider what it means for $d_nd_{n+1} = 0$ to hold. Suppose that we have two morphisms in
  $\FFinSet^\flat$:
  \[A \elto \bar A \mrto B \elto \bar B \mrto C.\]
  Up to isomorphism, this is the data of three sets $A$,$B$,$C$, such that $A
  \cap B = \bar A$ and $B \cap C = \bar B$.  The composition of these two
  morphisms is then $A \elto \bar A \cap \bar B \mrto C$.  This will be the zero
  morphism exactly when $\bar A \cap \bar B = \varnothing$---in other words,
  when the transition sets $\bar A$ and $\bar B$ do not intersect in $B$.

  Then, a chain complex consists of a sequence of sets $X_i$ equipped with intersections $\bar X_i = X_i \cap X_{i-1}$ such that $\bar X_{i+1}$ and $\bar X_i$ are disjoint in $X_i$. We can therefore visualize a chain complex as in the picture below (which coincidentally looks a bit like a real chain).
  \begin{equation}
  \cdots\raisebox{-.4\height}{\includegraphics[height=3cm]{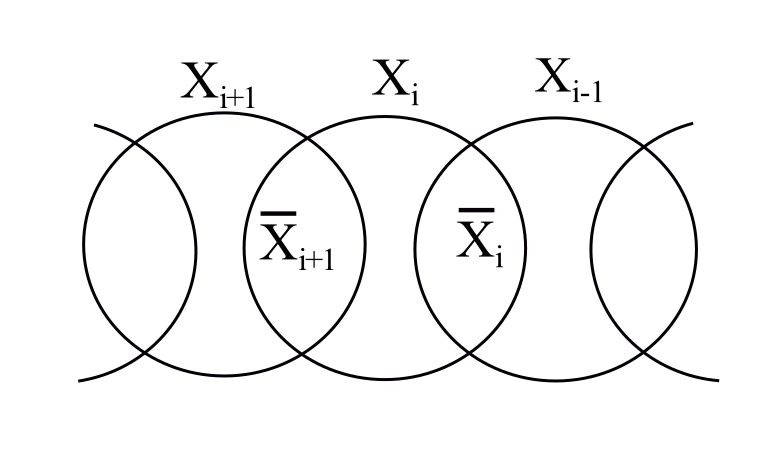}}\cdots
  \end{equation}

  Notice that this structure gives a natural decomposition of each $X_i$ into
  three subsets: $\bar X_{i+1}$, $\bar X_i$, and $X_i \backslash (\bar X_{i+1} \cup \bar X_i)$.
  This will be relevant in Definition \ref{defn:homology} when we define the homology of a chain complex.
\end{example}

\begin{remark}\label{freemodule}
  We have now discussed how a chain complex of sets
  \begin{diagram}
    {\cdots X_{i+1} & \bar X_{i+1} & X_i & \bar X_i & X_{i-1} \cdots\\};
    \eto{1-2}{1-1} \mto{1-2}{1-3} \eto{1-4}{1-3} \mto{1-4}{1-5}
  \end{diagram}
  can be interpreted combinatorially as a sequence of sets with specified intersections, or (less combinatorially, almost algebraically) as a sequence of maps between pointed sets. In fact there is also a fully algebraic interpretation of these complexes: for any ring $R$, chain complexes of sets describe the chain complexes of free $R$-modules whose differentials are composed of only projections and coproduct injections. In particular, given a chain complex of sets as above we get a chain complex of $R$-modules
  \[
  \cdots \to R^{X_{i+1}} \to R^{X_i} \to R^{X_{i-1}} \to \cdots
  \]
  where the differentials factor as the composite $R^{X_i} \to R^{\bar X_i} \to R^{X_{i-1}}$ which projects from $R^{X_i}$ to the components in the image of $\bar X_i$ and inserts 0s for each element of $X_{i-1}$ outside the image of $\bar X_i$. 
  
  This interpretation, induced by a structure-preserving double functor $\FFinSet \to R\text{-}\mathbb{M}\mathbf{od}$ sending $A$ to $R^A$, respects homology and therefore completely determines the homological theory of finite sets. While we won't discuss it further, it encourages the interpretation of finite sets as ``modules over the field with one element'' suggested by Jacques Tits.
\end{remark}

In any pseudo-commutative square as below left, its complementary square in the vertical direction has the form below right,
\begin{equation}
\csq{\varnothing}{B}{A}{C}{}{}{}{}\qquad\qquad\qquad
\mpbsq{A}{C\bbackslash B}{A}{C}{}{}{}{}
\end{equation}
which in particular includes a horizontal morphism from $A$ to $C \bbackslash B = k(B \erto C)$.  In the case of
finite sets this says that if $A$ and $B$ are disjoint subsets of $C$, then there is a natural inclusion of $A$ into $C \backslash B$.  (In the case of an abelian category, this expresses the statement that if $gf = 0$ then $f$ factors through $\ker g$.) There is similarly a vertical morphism from $B$ to $C \fforwardslash A = c(A \mrto C)$.

\begin{definition}\label{defn:homology}
  Let $\bbA$ be an ACGW category.  The \emph{$i$-th homology} $H_i(X_\dotp)$ of a chain complex $X_\dotp$ in $\bbA$ 
  \begin{diagram}
    {\cdots X_{i+1} & \bar X_{i+1} & X_i & \bar X_i & X_{i-1} \cdots\\};
    \eto{1-2}{1-1} \mto{1-2}{1-3} \eto{1-4}{1-3} \mto{1-4}{1-5}
  \end{diagram}  
  is defined by taking complements in both directions (in either order) of the squares defining the chain condition, as in \eqref{eqn.homology}. 
  \begin{diagram-numbered}{eqn.homology}
      {& & H_i(X_\dotp) \\ & X_i \bbackslash \bar X_i & & X_i  \fforwardslash  \bar X_{i+1} \\ \bar X_{i+1} & & X_i & & \bar X_i \\ & \bar X_{i+1} & & \bar X_i \\ & & \varnothing \\};
      \comm{2-2}{2-4} \comm{4-2}{4-4}
      \mto{5-3}{4-4} \eto{5-3}{4-2}
      \eq{4-2}{3-1} \eq{4-4}{3-5}
      \mto{4-2}{3-3} \eto{4-4}{3-3}
      \mto{2-2}{3-3} \eto{2-4}{3-3}
      \mto{3-1}{2-2} \eto{3-5}{2-4}
      \mto{1-3}{2-4} \eto{1-3}{2-2}
  \end{diagram-numbered}
\end{definition}

\begin{example}
   Let us compare this definition to the usual notion of homology for an abelian
   category in a bit more detail. If we consider a chain complex (with the epi-mono factorization of each differential) 
    \begin{equation}
    \begin{tikzcd}
        \cdots X_{i+1}\ar[rr,"d_{i+1}"]\ar[rd,"p_{i+1}"',->>] & & X_i\ar[rr,"d_{i}"]\ar[rd,"p_{i}"',->>] & & X_{i-1}\cdots\\
        & \bar X_{i+1}\ar[ur,>->,"j_{i+1}"'] & & \bar X_{i}\ar[ur,>->,"j_{i}"'] & & 
    \end{tikzcd}
    \end{equation} 
    then the $i$-th homology is given by $H_i(X_\dotp)=\frac{\ker d_i}{\im d_{i+1}}$. Note that $\im d_{i+1}=\bar X_{i+1}$, and that $\ker d_i=\ker (j_i p_i)=\ker p_i$; hence we can rewrite the homology as $H_i(X_\dotp)=\frac{\ker p_i}{\bar X_{i+1}}$. This quotient can be captured in the diagram \eqref{eqn.abhomology}
    \begin{equation}\label{eqn.abhomology}
    \begin{tikzcd}
        \bar X_{i+1}\dar[equal]\rar[>->] & \ker p_i\dar[>->]\rar[->>] & \frac{\ker p_i}{\bar X_{i+1}}=H_i(X_\dotp)\\
        \bar X_{i+1}\ar[dr,phantom,"\circlearrowleft"]\rar[>->] \dar[->>] & X_i\dar["p_i", ->>] & \\
        0\rar & \bar X_i & 
    \end{tikzcd}
    \end{equation}
    which is precisely the left half of the diagram defining homology. We start with the commutative square on the bottom given by the chain condition, take the kernels of the vertical epis, and then the cokernel of the induced inclusion $\bar X_{i+1}\hookrightarrow \ker p_i$. The other half of the grid diagram is redundant, but illustrates the alternative definition of homology as the kernel of the induced map $X_i/\bar X_{i+1} \to \bar X_i$.
\end{example}

\begin{example}
    Consider now the case of $\FFinSet$. Translating the diagram above for the ACGW category of sets, we obtain the diagram 
    \begin{diagram-numbered}{}
    {\bar X_{i+1} & X_i\backslash \bar X_i & \left(X_i\backslash\bar X_i\right)\backslash \bar X_{i+1}  \\
    \bar X_{i+1} & X_i \\
    \emptyset & \bar X_i\\};
    \mto{1-1}{1-2} \eq{1-1}{2-1} \eto{3-1}{2-1} \eto{1-3}{1-2} \mto{1-2}{2-2} \eto{3-2}{2-2} \mto{3-1}{3-2} \mto{2-1}{2-2} 
    \path[font=\scriptsize] (m-3-1) edge[-,white] node[pos=0.05]
  {\textcolor{black}{$\urcorner$}}  (m-2-2);
    \end{diagram-numbered}
    and we see that $H_i(X_\dotp) = \left(X_i\backslash\bar X_i\right)\backslash \bar X_{i+1}=X_i\backslash \left( \bar X_{i+1}\cup\bar X_i\right)$. In this case the symmetry in the definition is even clearer as we also have $\left(X_i \backslash \bar X_{i+1}\right) \backslash \bar X_i = X_i \backslash \left(\bar X_{i+1} \cup \bar X_i\right)$.

  In other words, the $i$-th homology measures the elements of $X_i$ that are not reached
  by either differential. In pictures, we can see the homology in a chain complex of sets as in \eqref{eqn.sethomology}.
  \begin{equation}\label{eqn.sethomology}
  \raisebox{-.45\height}{\includegraphics[height=4cm]{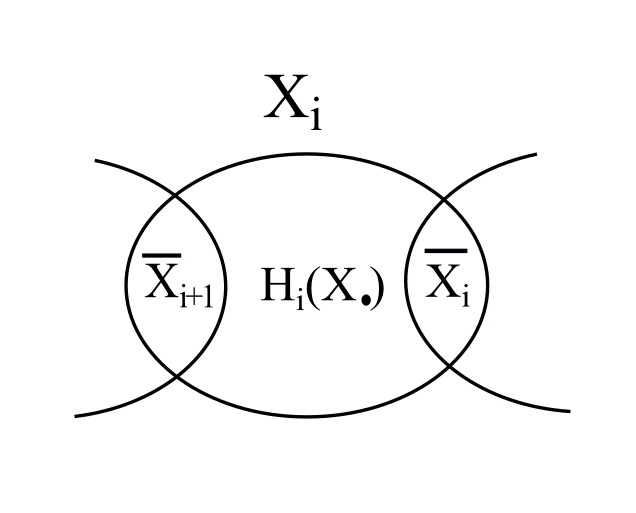}} \qquad\quad\begin{inline-diagram}
    {\cdots X_{i+1} & \bar X_{i+1} & X_i & \bar X_i & X_{i-1} \cdots\\};
    \eto{1-2}{1-1} \mto{1-2}{1-3} \eto{1-4}{1-3} \mto{1-4}{1-5}
  \end{inline-diagram}  
  \end{equation}

  This picture provides a helpful guide for how to interpret the subsets $\bar X_{i+1},\bar X_i$ of $X_i$ and their complements in analogy with the algebraic setting. The inclusion $\bar X_{i+1} \mrto X_i$ is completely analogous to the inclusion of $\im(d_{i+1})$ into $X_i$, while the inclusion $X_i\backslash\bar X_i \mrto X_i$ corresponds to the inclusion of the kernel. Therefore in the picture above $\bar X_{i+1}$ represents the image of a differential $d_{i+1}$ while $\bar X_{i+1} \cup H_i(X_\dotp)$ represents the kernel of the next differential $d_i$. $\bar X_i$ in the picture can be thought of as the quotient of $X_i$ by $\ker(d_i)$.
\end{example}

As usual, a definition of homology leads to a notion of acyclic chain complexes.

\begin{definition}
    A chain complex in $\bbA$
    \begin{diagram}
    {\cdots X_{i+1} & \bar X_{i+1} & X_i & \bar X_i & X_{i-1} \cdots\\};
    \eto{1-2}{1-1} \mto{1-2}{1-3} \eto{1-4}{1-3} \mto{1-4}{1-5}
    \end{diagram} is \emph{exact} if $H_i(X_\dotp)=\varnothing$ for all $i$.
\end{definition}

Unwinding this definition, $H_i(X_\dotp)$ is $\varnothing$ exactly when each subdiagram $\bar X_{i+1} \mrto X_i \elto \bar X_i$ is a kernel-cokernel pair; that is, $\bar X_{i+1}$ is isomorphic to the kernel of $X_i \elto \bar X_i$ and $\bar X_i$ is isomorphic to the cokernel of $\bar X_{i+1} \mrto X_i$. In this case the horizontal morphism $\bar X_{i+1} \mrto X_i \bbackslash \bar X_i$ (whose cokernel is $H_i(X_\dotp)$) is an isomorphism. This condition corresponds to the chain complex being ``exact at $X_i$'' when $\bbA$ arises form an abelian category.


\begin{example}\label{exactcomplexsets}
 Expressing the above condition purely in the language of sets, we see that a chain complex of sets $X_\dotp$ is exact if and only if $X_i=\bar X_{i+1}\sqcup \bar X_i$  for all $i$. Based on this we will draw exact complexes as in \eqref{eqn.exactcomplex},
 \begin{equation}\label{eqn.exactcomplex}
 \includegraphics[height=4cm]{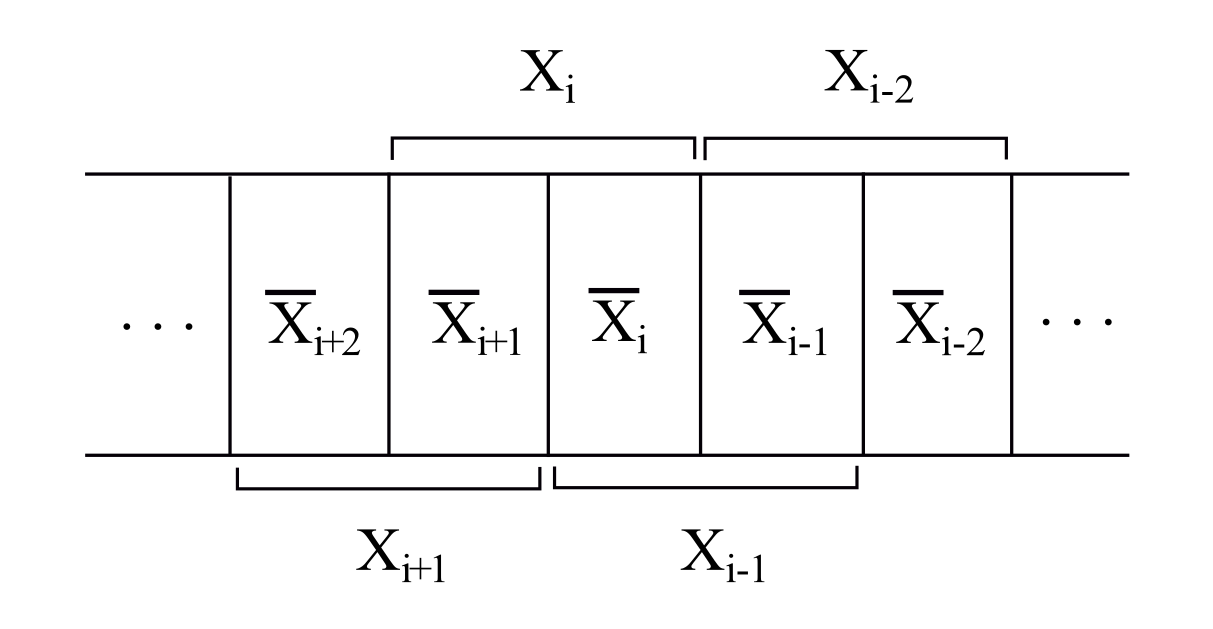}
 \end{equation}
 where the complex is determined by a sequence of ``adjacent'' but not overlapping sets $...,\bar X_{i+1},\bar X_i,...$ whose successive pairs form the sets $X_i$. 
\end{example}

\begin{example}\label{ex:shortexactseq}
    Our notion of exact chain complex is compatible with the notion of ``short exact sequence'' that we obtain from the complement functors $c$ and $k$ in our ACGW framework. Recall that in an abelian category, a short exact sequence consists of a diagram 
    \begin{diagram}
        {A & B & C\\};
        \mto{1-1}{1-2}^{i} \diagArrow{->>}{1-2}{1-3}^{p}
    \end{diagram} where $i$ is a mono, $p$ is an epi, $A=\ker p$ and $C=\coker i$. Equivalently, this is an exact chain complex concentrated on only three degrees.

    In our ACGW category of finite sets, the analogue of the first description consists of a diagram \[A\mrto^{i} B\elto^{p} C\] where both $i$ and $p$ are inclusions, $A=B\backslash C$, and $C=B\backslash A$. Then, we have that $B=A\sqcup C$, and our ``short exact sequence'' above is the same data as the exact complex concentrated on three degrees
    \begin{diagram}
    {A & A & B & C& C. \\};
    \eq{1-2}{1-1} \mto{1-2}{1-3} \eto{1-4}{1-3} \eq{1-4}{1-5}
    \end{diagram}
  \end{example}

There is a standard illustration of an exact chain complex as a ``zigzag of
short exact sequences'' which are glued together.  We can draw a similar picture
in an ACGW category as in \eqref{eqn.exactzigzag}, emphasizing how the exact complex $X_\dotp$ is built out of short exact sequences.
\begin{diagram-numbered}{eqn.exactzigzag}
  {  
    \cdots & & \bar X_{i+1} & & \bar X_i && \bar X_{i-1} && \cdots\\
    & X_{i+1} & & X_i & & X_{i-1} && X_{i-2} \\};
  \mto{1-1}{2-2} \mto{1-3}{2-4} \mto{1-5}{2-6} \mto{1-7}{2-8}
  \eto{1-3}{2-2} \eto{1-5}{2-4} \eto{1-7}{2-6} \eto{1-9}{2-8}
\end{diagram-numbered}
  
We now turn our attention to the morphisms between chain complexes.

\begin{definition}\label{defn:chainmap}
  A \emph{morphism of chain complexes} in $\bbA$, written
  $f_\dotp \colon X_\dotp \to Y_\dotp$, consists of morphisms
  $f_i: X_i \elto Z_i \mrto Y_i$ in $\bbA^\flat$ such that for each $i$
  there exists a diagram
  \begin{diagram-numbered}{}
      { X_i & \bar X_i & X_{i-1} \\
        Z_i & \bar Z_i & Z_{i-1} \\
        Y_i & \bar Y_i & Y_{i-1} \\};
      \eto{1-2}{1-1} \mto{1-2}{1-3} 
      \eto{2-2}{2-1} \mto{2-2}{2-3} 
      \eto{3-2}{3-1} \mto{3-2}{3-3} 
      
      \eto{2-1}{1-1} \mto{2-1}{3-1}
      \eto{2-2}{1-2}  \mto{2-2}{3-2} 
      \eto{2-3}{1-3} \mto{2-3}{3-3}

      \comm{2-1}{3-2} \comm{1-2}{2-3}
    \end{diagram-numbered}
    which consists of two commutative squares and two pseudo-commutative
    squares.  Note that such a diagram, if it exists, is unique: the data of the
    maps $f_i$ and the differentials in the chain complex give the outside of
    the square, and the two pseudo-commutative squares are unique (up to unique
    isomorphism).  This is exactly the condition that this square is
    commutative inside $\bbA^\flat$.

    The category of chain complexes over $\bbA$ and morphisms of chain complexes is denoted $\Ch_\bbA$.
\end{definition}

\begin{example}\label{chainmapsets}
  In the case of finite sets, this is expressing the following observation.
  Suppose that we are given four subsets $X_i$, $X_{i-1}$, $Y_i$, and $Y_{i-1}$
  of some set $A$.  Then it must be the case that
  \[(X_i\cap X_{i-1}) \cap (Y_{i} \cap Y_{i-1}) = (X_i\cap Y_i) \cap (X_{i-1}
    \cap Y_{i-1}).\]
  This quadruple intersection is exactly $\bar Z_i$.  The compatibility
  condition above is reverse-enginneering this sitation: suppose that we know
  that there exist sets $X_i$, $X_{i-1}$, $Y_i$, $Y_{i-1}$ and we're given
  pairwise-intersection data $X_i \cap X_{i-1}$, $Y_i \cap Y_{i-1}$, $X_i\cap
  Y_i$ and $X_{i-1} \cap Y_{i-1}$ (with the additional assumption that $X_i \cap
  Y_{i-1}$ and $X_{i-1}\cap Y_i$ are as small as possible).  Is it possible that
  such an $A$ exists?  The answer is yes exactly when a diagram as above exists.

  Pictorially, we can view a morphism of chain complexes as on the left of \eqref{eqn.setchainmorphism}, where each oval consists of the sets $X_i$ and $Y_i$ overlapping on $Z_i$ as illustrated on the right of \eqref{eqn.setchainmorphism}.
  \begin{equation}\label{eqn.setchainmorphism}
      \raisebox{-.45\height}{\includegraphics[height=4.5cm]{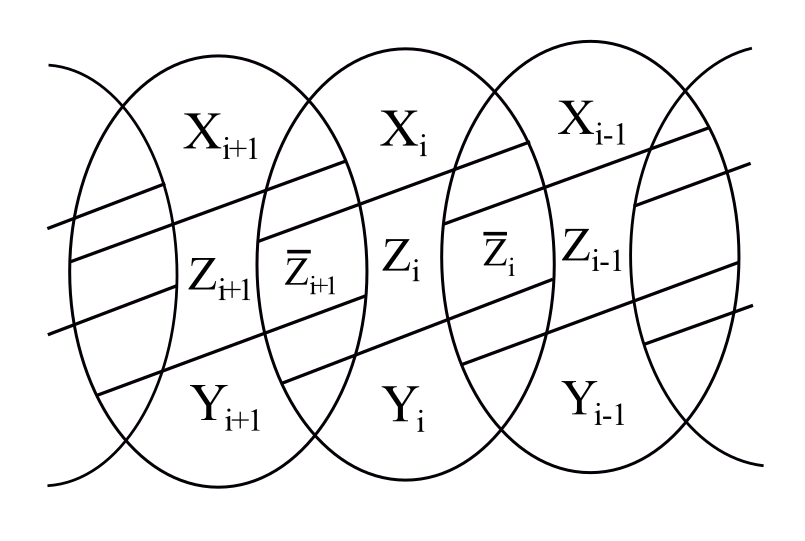}}\qquad\qquad\raisebox{-.45\height}{\includegraphics[height=4.5cm]{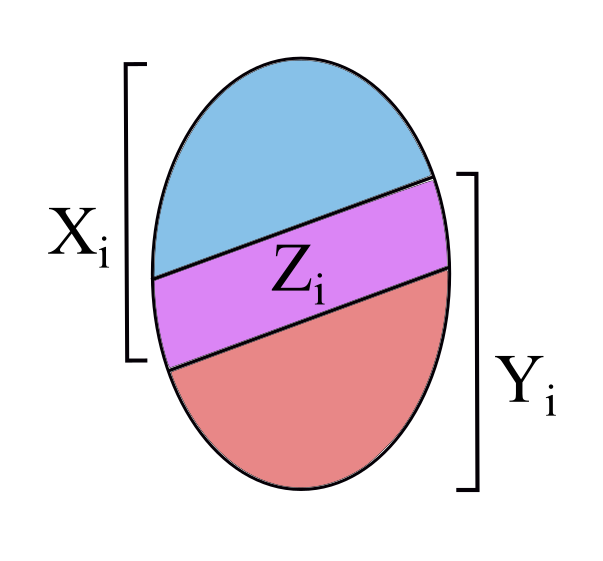}}
  \end{equation}
  While this picture is somewhat complicated, we will see that it becomes easier to work with the pictures (and the morphisms themselves) when horizontal and vertical morphisms of chain complexes are treated separately rather than combined into a span.

  The category $\Ch_{\FFinSet}$ is exactly $i\Ch_\FinSet$. 
\end{example}

Once again, we compare this definition to the usual notion for abelian categories, and then discuss how it can be interpreted for sets using partial functions.

\begin{example}
    In an abelian category $\mathcal{A}$, a morphism of chain complexes
    $f_\dotp\colon X_\dotp\to Y_\dotp$ consists of a sequence of morphism
    $f_i:X_i \rto Y_i$ such that for all $i$ the diagram
    \begin{diagram}
        {X_i & X_{i-1}\\
        Y_i  & Y_{i-1}\\};
        \to{1-1}{1-2}^{d_i} \to{1-1}{2-1}_{f_i} \to{1-2}{2-2}^{f_{i-1}} \to{2-1}{2-2}_{d'_i}
      \end{diagram} commutes.  To compare it with our ACGW definition, we
      factor the maps as epi-mono in order to express things in ACGW
      language. First, we can factor the differentials to obtain a commutative
      diagram as in \eqref{eqn.absquarefactor}.
    \begin{equation}\label{eqn.absquarefactor}
        \begin{tikzcd}
        X_i\ar[rr,"d_i"]\ar[rd,"p_i",->>]\ar[ddd,"f_i"] & & X_{i-1}\ar[ddd,"f_{i-1}"]\\
        &\bar X_i\ar[ur,"j_i",>->]\ar[d, dashed, "\bar f_{i-1}"] &\\
        & \bar Y_i\ar[rd,"j'_i",>->] &\\
        Y_i\ar[ur,"p'_i",->>]\ar[rr,"d'_i"] & & Y_{i-1}
    \end{tikzcd}
    \end{equation} 
    The dashed map $\bar f_{i-1}$ is simply the restriction of $f_{i-1}$ to $\bar X_i=\im d_i$; the fact that $f_{i-1}d_i=d'_i f_i$ ensures that the image of $\bar f_{i-1}$ lies inside of $\bar Y_i=\im d'_i$ as implicitly claimed in the diagram. Then, we further factor the vertical morphisms as depicted in the commutative diagram in \eqref{eqn.absquaremorefactor}.
    \begin{diagram-numbered}{eqn.absquaremorefactor}
    { X_i & \bar X_i & X_{i-1} \\
     Z_i & \bar Z_i & Z_{i-1} \\
     Y_i & \bar Y_i & Y_{i-1} \\};
    \diagArrow{->>}{1-1}{1-2}^{p_i} \mto{1-2}{1-3}^{j_i}
    \diagArrow{dashed,->}{2-1}{2-2}^{\bar p'_i} \diagArrow{dashed,->}{2-2}{2-3}^{\bar j'_i} 
    \diagArrow{->>}{3-1}{3-2}_{p'_i} \mto{3-2}{3-3}_{j'_i} 

    \diagArrow{->>}{1-1}{2-1}^{f^1_{i}} \mto{2-1}{3-1}_{f^2_{i}}
    \diagArrow{->>}{1-2}{2-2}^{\bar f^1_{i-1}} \mto{2-2}{3-2}^{\bar f^2_{i-1}}
    \diagArrow{->>}{1-3}{2-3}^{f^1_{i-i}} \mto{2-3}{3-3}^{f^2_{i-i}}
    \end{diagram-numbered}
     The dashed maps are the restrictions of the corresponding maps to the images, just as in the previous diagram. Finally, we have that $\bar p'_i$ is an epi, since $\bar p'_i f^1_i=\bar f^1_{i-1}p_i$ which is an epi, and similarly $\bar j'_i$ is a mono, as $f^2_{i-1}\bar j'_i=j'_i \bar f^2_{i-1}$ which is a mono. We can now see that this picture matches precisely our definition of morphism of complexes of sets.
\end{example}

\begin{remark}
    We can also interpret our morphisms of chain complexes of sets by interpreting the morphisms in $\FinSet^\flat$ as injective partial functions, where the backwards monomorphism is the inclusion of the domain of a partial function. Suppose that we are
    given a square of injective partial functions as below.
    \begin{diagram}
        {X_i & X_{i-1}\\
        Y_i  & Y_{i-1}\\};
        \to{1-1}{1-2}^{d_i} \to{1-1}{2-1}_{f_i} \to{1-2}{2-2}^{f_{i-1}} \to{2-1}{2-2}_{d'_i}
    \end{diagram}
    The composite $f_{i-1}d_i$ is given by the map 
    \begin{equation*}
        x\mapsto \begin{cases} * \text{ if } x\in X_i\backslash \bar X_i \\ d_i x \text{ if } x\in\bar X_i\end{cases} \mapsto \begin{cases} * \text{ if } x\in X_i\backslash\bar X_i \\ * \text{ if } x\in \bar X_i \text{ and } d_i x\in X_{i-1}\backslash Z_{i-1} \\ f_{i-1}d_{i-1}x \text{ if } x\in\bar X_i \text{ and } d_ix\in Z_{i-1}\end{cases}
    \end{equation*} where $\bar X_i$ denotes the partial domain of $d_i$ and $Z_{i-1}$ the partial domain of $f_{i-1}$. Then, the partial domain of the composite $f_{i-1}d_i$ is precisely the pullback 
    \begin{diagram}
    { \bar Z_i &  Z_{i-1} \\
      \bar X_{i} & X_{i-1} \\};
      \mto{2-1}{2-2} \eto{1-2}{2-2} \mto{1-1}{1-2} \eto{1-1}{2-1}
      \pbsym{1-1}{2-2};
\end{diagram} The requirement that the chain map $f$ commutes with the differentials amounts to saying that the composites $f_{i-1}d_i$ and $d'_i f_i$ have the same partial domain and agree on that domain, which can be expressed as the diagram given in Definition \ref{defn:chainmap}.
\end{remark}

Now that we have a definition of a morphism of chain complexes, we can use it to
define an ACGW structure on $\Ch_\bbA$.

\begin{definition}
  The ACGW category $\CCh_\bbA$ has
  \begin{description}
  \item[objects] objects of $\Ch_\bbA$,
  \item[horizontal morphisms] a horizontal morphism $X_\dotp \mrto Y_\dotp$ is a
    sequence of horizontal morphisms $X_i \mrto Y_i$ such that there exists a
    horizontal morphism $\bar X_i \mrto \bar Y_i$ such that the resulting
    diagram
    \begin{diagram-numbered}{}
      { X_i & \bar X_i & X_{i-1} \\
        Y_i & \bar Y_i & Y_{i-1} \\};
      \eto{1-2}{1-1} \mto{1-2}{1-3}
      \eto{2-2}{2-1} \mto{2-2}{2-3}
      \mto{1-1}{2-1} \mto{1-2}{2-2} \mto{1-3}{2-3}
      \comm{1-1}{2-2}
    \end{diagram-numbered}
    consists of a pseudo-commutative square and a commutative square.
  \item[vertical morphisms] a vertical morphism $Z_\dotp \erto Y_\dotp$ is a
    sequence of vertical morphisms $Z_i \erto Y_i$ such that there exists a
    vertical morphism $\bar Z_i \erto \bar Y_i$ such that the resulting diagram
    \begin{diagram-numbered}{}
      { Z_i & \bar Z_i & Z_{i-1} \\
        Y_i & \bar Y_i & Y_{i-1} \\};
      \eto{1-2}{1-1} \mto{1-2}{1-3}
      \eto{2-2}{2-1} \mto{2-2}{2-3}
      \eto{1-1}{2-1} \eto{1-2}{2-2} \eto{1-3}{2-3}
      \comm{1-2}{2-3}
    \end{diagram-numbered}
    consists of a commutative square and a pseudo-commutative square.
  \item[pseudo-commutative squares] those that are pseudo-commutative at each
    $i$ and each ``halfway point.''
  \end{description}
\end{definition}

We choose not include the proof of the ACGW axioms, which is very technical and proceeds analogously to that of \cite[Theorem 5.6.16]{thesis}.

\begin{example}
    In the case of an abelian category, the horizontal and vertical morphisms of chain complexes are natural monomorphisms and (reversed) epimorphisms. In fact more generally, the horizontal and vertical morphisms of chain complexes are exactly the morphisms of chain complexes which, rather than consisting of a span at each level, contain only horizontal or vertical morphisms.
\end{example}

\begin{example}\label{setchainmaps}
    In the case of sets, horizontal and vertical morphisms of chain complexes can be pictured as in \eqref{eqn.setmemorphisms} on the left and right respectively. 
    \begin{equation}\label{eqn.setmemorphisms}
    \begin{tikzpicture}[baseline]%
      \matrix (m) [matrix of math nodes,row sep=2.6em, column sep=2.8em]
      { X_i & \bar X_i & X_{i-1} \\
        Y_i & \bar Y_i & Y_{i-1} \\};
      \eto{1-2}{1-1} \mto{1-2}{1-3}
      \eto{2-2}{2-1} \mto{2-2}{2-3}
      \mto{1-1}{2-1} \mto{1-2}{2-2} \mto{1-3}{2-3}
      \comm{1-1}{2-2}
    \end{tikzpicture}\qquad\qquad\qquad\begin{tikzpicture}[baseline]%
      \matrix (m) [matrix of math nodes,row sep=2.6em, column sep=2.8em]
      { Z_i & \bar Z_i & Z_{i-1} \\
        Y_i & \bar Y_i & Y_{i-1} \\};
      \eto{1-2}{1-1} \mto{1-2}{1-3}
      \eto{2-2}{2-1} \mto{2-2}{2-3}
      \eto{1-1}{2-1} \eto{1-2}{2-2} \eto{1-3}{2-3}
      \comm{1-2}{2-3}
    \end{tikzpicture}
    \end{equation}
    \[
    \raisebox{-.45\height}{\includegraphics[height=6cm]{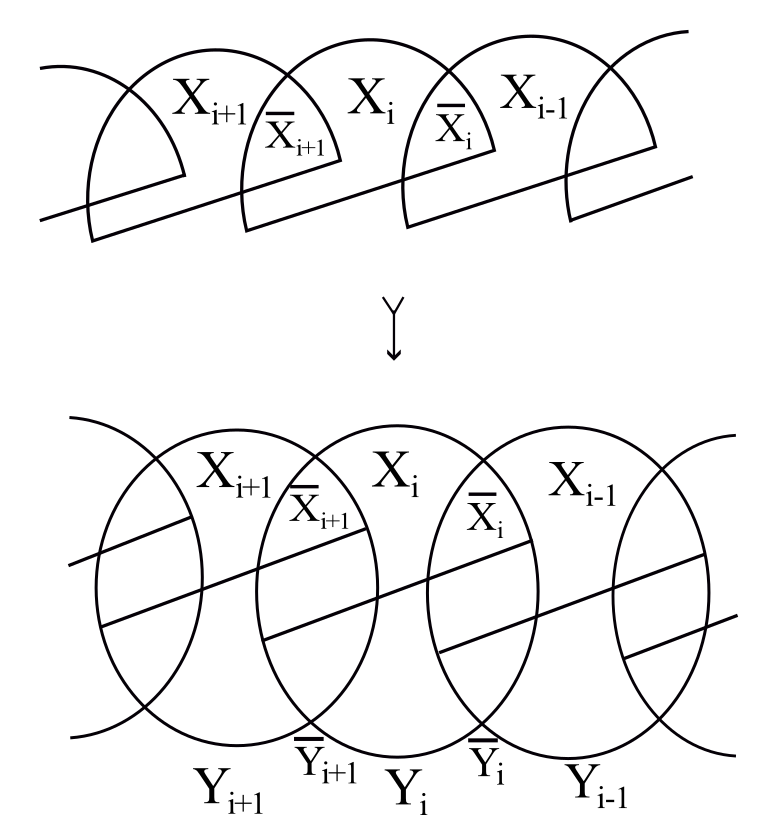}}\qquad\qquad\raisebox{-.45\height}{\includegraphics[height=6cm]{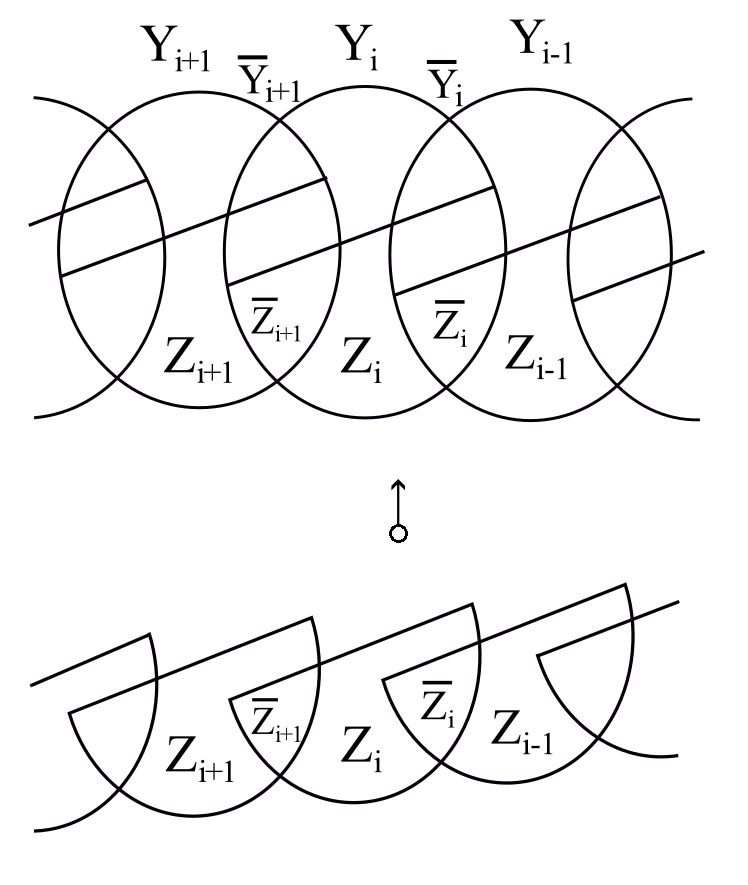}}
    \]
    In both cases there is a levelwise inclusion of chain complexes which commutes with the inclusions in the differentials, but horizontal morphisms require $\bar X_i$ to be the entire intersection of $X_i$ and $\bar Y_i$, while vertical morphisms instead require $\bar Z_i$ to be the intersection of $Z_{i-1}$ and $\bar Y_i$. These pictures provide a convenient tool for keeping track of this, where in a chain complex pictured as overlapping circles the image of a horizontal morphism behaves as the top half of the up-right directed lines and the image of a vertical morphism behaves as the bottom half.

    These pictures also illustrate how to take the complement of a horizontal or vertical morphism, as the inclusion of the levelwise complement at each level is evidently of the opposite form to the original morphism. Moreover, a chain complex $Y_\dotp$ partitioned as in \eqref{eqn.setSES} can be regarded as a short exact sequence $X_\dotp \mrto Y_\dotp \elto Z_\dotp$ (or "extension" or "complementary pair").
    \begin{equation}\label{eqn.setSES}
    \raisebox{-.45\height}{\includegraphics[height=8cm]{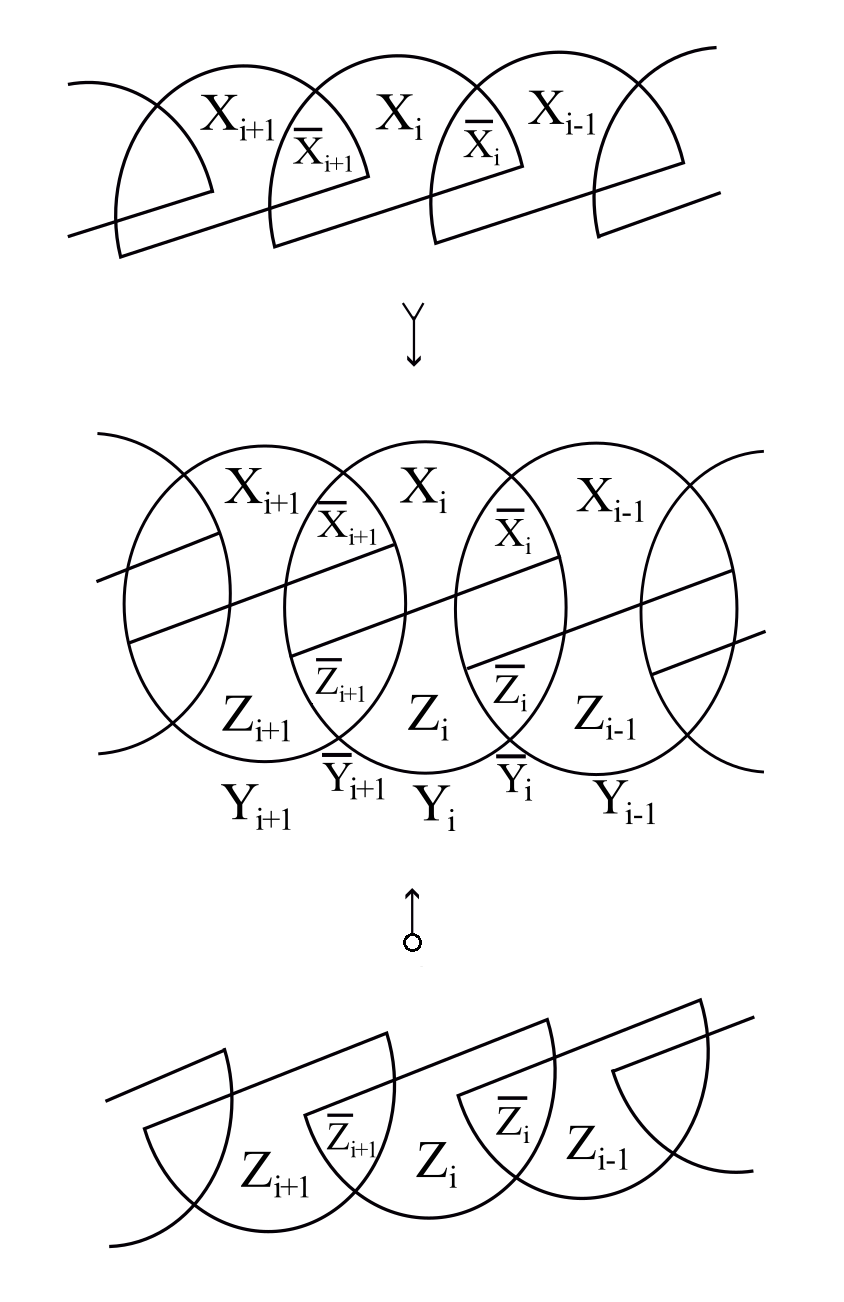}}
    \end{equation}
    These complements are not computed levelwise on the intersections, as each $\bar Y_i$ may include more than the union of $\bar X_i$ and $\bar Z_i$, but as the picture shows in the complement of the horizontal morphism $X_\dotp \mrto Y_\dotp$ the intersections can be computed by intersecting $\bar Y_i \backslash \bar X_i$ with $Y_{i-1} \backslash X_{i-1}$, and similarly for the complement of a vertical morphism. 
\end{example}

We conclude this section with an alternative description of a morphism of chain complexes that factors them into horizontal and vertical morphisms. As we've explained previously, our spans of sets $X \elto Z \mrto Y$ encode partially defined inclusions $X\to Y$. The same is true for chain complexes.

\begin{lemma}\label{lemma:chainmap}
    A morphism of chain complexes $f_\dotp \colon X_\dotp\to Y_\dotp$ consists
    of a span of inclusions of chain complexes \[X_\dotp \elto Z_\dotp \mrto
      Y_\dotp.\]  More formally,
    \[(\CCh_\bbA)^\flat \cong \Ch_\bbA.\]
  \end{lemma}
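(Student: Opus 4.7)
The plan is to define mutually inverse assignments at the level of morphism sets (the objects of the two categories coincide by construction) and to check that these assignments respect composition.

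For the correspondence on morphism sets, the forward direction is largely bookkeeping. Given a morphism $f_\dotp \colon X_\dotp \to Y_\dotp$ in $\Ch_\bbA$ with its defining compatibility diagram at each level, the middle row $Z_i \elto \bar Z_i \mrto Z_{i-1}$ supplies the data of a candidate chain complex $Z_\dotp$. The top two squares of that diagram (a pseudo-commutative square of vertical arrows and a mixed commutative square) are precisely the data required to present a vertical morphism of chain complexes $Z_\dotp \erto X_\dotp$ in $\CCh_\bbA$; dually, the bottom two squares present a horizontal morphism $Z_\dotp \mrto Y_\dotp$. Conversely, a span $X_\dotp \elto Z_\dotp \mrto Y_\dotp$ in $\CCh_\bbA$ restricts at each level to a span in $\bbA^\flat$, and the defining data of the horizontal and vertical morphisms reassemble into exactly the diagram of Definition \ref{defn:chainmap}. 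These two operations are manifestly inverse since no choices are made.

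The main technical obstacle is the one piece of structure not automatic from Definition \ref{defn:chainmap}: verifying that the $Z_\dotp$ constructed in the forward direction actually satisfies the chain condition $\bar Z_{i+1} \oslash_{Z_i} \bar Z_i = \varnothing$. My plan here is to use the top-left pseudo-commutative squares of the chain map diagram, together with the mixed commutative squares relating them to the horizontal direction, to realize both $\bar Z_i$ and $\bar Z_{i+1}$ as cartesian pullbacks of $\bar X_i$ and $\bar X_{i+1}$ along $Z_i \erto X_i$. Associativity of $\oslash$ then reduces $\bar Z_{i+1} \oslash_{Z_i} \bar Z_i$ to $(\bar X_{i+1} \oslash_{X_i} \bar X_i) \oslash_{X_i} Z_i$, which is $\varnothing$ because $X_\dotp$ is a chain complex and $\varnothing$ is strict initial. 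I expect this transfer of the chain condition from $X_\dotp$ to $Z_\dotp$ to be the only step requiring genuine ACGW manipulation rather than direct unpacking of definitions.

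Finally, composition of spans in $(\CCh_\bbA)^\flat$ is computed via the $\oslash$ construction in $\CCh_\bbA$, which by definition is formed levelwise from $\oslash$ in $\bbA$ (since pseudo-commutative squares in $\CCh_\bbA$ are levelwise). Composition in $\Ch_\bbA$ is itself the levelwise composition of morphisms in $\bbA^\flat$, which uses the same $\oslash$. The correspondence is therefore automatically functorial, yielding the desired isomorphism of categories.
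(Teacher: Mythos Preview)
Your high-level plan matches the paper's exactly: both reduce the lemma to verifying that the middle row $Z_\dotp$ of the compatibility diagram satisfies the chain condition, and the paper's own proof says nothing more than this.

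There is, however, a genuine gap in your sketch of that verification. You claim that $\bar Z_i$ is the cartesian pullback of $\bar X_i$ along $Z_i \erto X_i$, but the upper-left square of the chain-map diagram (the one consisting entirely of vertical arrows) is only required to be \emph{commutative}, not cartesian. A concrete counterexample in $\FFinSet$: take $X_i = X_{i-1} = \{a,b\}$ with $\bar X_i = \{a\}$, and $Z_i = \{a\}$, $Z_{i-1} = \{b\}$. Then $\bar Z_i = \bar X_i \oslash_{X_{i-1}} Z_{i-1} = \varnothing$, whereas $\bar X_i \times_{X_i} Z_i = \{a\}$. So your reduction of $\bar Z_{i+1} \oslash_{Z_i} \bar Z_i$ to $(\bar X_{i+1} \oslash_{X_i} \bar X_i) \oslash_{X_i} Z_i$ does not go through as stated.

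The fix is small. What commutativity of that square \emph{does} give you is the factorization $\bar Z_i \erto \bar X_i \erto X_i$. Combined with $\bar Z_{i+1} = \bar X_{i+1} \oslash_{X_i} Z_i$ (which is correct, from the upper-right pseudo-commutative square at level $i+1$), associativity of $\oslash$ along composable vertical arrows gives
\[
\bar Z_{i+1} \oslash_{Z_i} \bar Z_i \;\cong\; \bar X_{i+1} \oslash_{X_i} \bar Z_i \;\cong\; (\bar X_{i+1} \oslash_{X_i} \bar X_i) \oslash_{\bar X_i} \bar Z_i \;\cong\; \varnothing \oslash_{\bar X_i} \bar Z_i \;=\; \varnothing.
\]
Note also that you have the square labels reversed: in the defining diagram the upper-left and lower-right squares are the commutative ones, while the upper-right and lower-left are pseudo-commutative.
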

\begin{proof}
The proof of this is straightforward, amounting only to checking that the middle row of the diagram in \eqref{eqn.proofchainmorphism} defining a morphism of chain complexes $f_\dotp$ does in fact satisfy the chain condition.
   \begin{diagram-numbered}{eqn.proofchainmorphism}
    {X_i & \bar X_i & X_{i-1} \\
     Z_i & \bar Z_i & Z_{i-1} \\
     Y_i & \bar Y_i & Y_{i-1} \\};
    \eto{1-2}{1-1} \mto{1-2}{1-3} 
    \eto{2-2}{2-1} \mto{2-2}{2-3} 
    \eto{3-2}{3-1} \mto{3-2}{3-3} 

    \eto{2-1}{1-1}^{f^1_{i}} \mto{2-1}{3-1}_{f^2_{i}}
    \eto{2-2}{1-2}^{\bar f^1_{i-1}} \mto{2-2}{3-2}_{\bar f^2_{i-1}}
    \eto{2-3}{1-3}^{f^1_{i-i}} \mto{2-3}{3-3}_{f^2_{i-i}}

    \path[font=\scriptsize] (m-2-2) edge[-,white] node[pos=0.05]
  {\textcolor{black}{$\urcorner$}}  (m-1-3);
  \path[font=\scriptsize] (m-2-2) edge[-,white] node[pos=0.05]
  {\textcolor{black}{$\llcorner$}}  (m-3-1);
    \end{diagram-numbered}
\end{proof}

\section{Recovering classical results}\label{section:results}

In this section we study the notions of chain complexes, exact sequences, and homology of finite sets introduced in Section \ref{section:homologydefn} to recover several classical results present in the context of abelian categories. 

\subsection{The Snake Lemma}

Our goal in this subsection is to prove a version of the Snake Lemma for finite sets. We begin by recalling the classical statement of the Snake Lemma.

\begin{theorem}[Snake Lemma] Let $\mathcal{A}$ be an abelian category and consider a morphism of exact sequences as in \eqref{eqn.oldsnake}
\begin{diagram-numbered}{eqn.oldsnake}
    { & A & B & C & 0\\
    0 & A' & B' & C'\\};
    \diagArrow{->}{1-2}{1-3} \diagArrow{->>}{1-3}{1-4} \diagArrow{->}{1-4}{1-5}
    \diagArrow{->}{2-1}{2-2} \mto{2-2}{2-3} \diagArrow{->}{2-3}{2-4}
    \diagArrow{->}{1-2}{2-2}^{f} \diagArrow{->}{1-3}{2-3}^{g} \diagArrow{->}{1-4}{2-4}^{h}
\end{diagram-numbered}
 Then, there exists an exact sequence \[\ker f\to \ker g\to \ker h \to \coker f\to\coker g \to\coker h.\] Moreover, if $A\to B$ is a mono, then so is $\ker f\to\ker g$, and if $B'\to C'$ is an epi, then so is $\coker g\to\coker h$.
\end{theorem}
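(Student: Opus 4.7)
My plan is to prove this within the ACGW framework developed so far in the paper, so that the classical statement follows via the identification $\bbA^\flat \cong \mathcal{A}$ for $\bbA$ the ACGW category associated to $\mathcal{A}$. I would first interpret each row of \eqref{eqn.oldsnake} as a three-term exact chain complex in $\bbA$ via Example \ref{ex:shortexactseq}, so that the morphism $(f,g,h)$ becomes a chain map $X_\dotp \to Y_\dotp$. By Lemma \ref{lemma:chainmap} this decomposes as a span of inclusions $X_\dotp \elto Z_\dotp \mrto Y_\dotp$, whose middle term $Z_\dotp$ is itself a chain complex with nontrivial levels $Z_f, Z_g, Z_h$---the shared-domain objects of the spans representing $f, g, h$. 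The kernels and cokernels of $f, g, h$ are then realized via the ACGW complement functors: $\ker(\phi) = X \bbackslash Z_\phi$ and $\coker(\phi) = Y \fforwardslash Z_\phi$.

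The key observation is that the two inclusions $Z_\dotp \mrto X_\dotp$ and $Z_\dotp \mrto Y_\dotp$ give short exact sequences of chain complexes, so their levelwise complements produce short exact sequences in $\bbA$:
\[
\ker f \mrto \ker g \elto \ker h \qquad \text{and} \qquad \coker f \mrto \coker g \elto \coker h.
\]
To bridge these with a connecting morphism $\delta : \ker h \to \coker f$, I would exploit the pullback--complement duality \eqref{eq:squareduality}: pulling back the inclusion $\ker h \mrto C$ along the span defining $g$ yields a subobject of $B$ that factors through $A' \mrto B'$ (by the chain-map compatibility), which composed with the quotient to $\coker f$ gives $\delta$. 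In the abelian case this construction recovers the classical connecting homomorphism.

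Exactness at $\ker g$ and $\coker g$ is immediate from the short exactness of the two three-term sequences above, while exactness at $\ker h$ and $\coker f$ amounts to the essential uniqueness of the pullback squares defining $\delta$. The monic (resp.\ epic) clauses follow because, when $A \mrto B$ (resp.\ $B' \elto C'$) is a genuine monomorphism (resp.\ epimorphism), its complement inherits this property. I expect the main obstacle to be the construction of $\delta$ in the general ACGW setting, since the classical element-chase is unavailable; however, in the finite set case this simplifies dramatically: the chain-map conditions force $Z_g = Z_f \sqcup Z_h$ as subsets of both $B$ and $B'$, yielding $\ker g = \ker f \sqcup \ker h$ and $\coker g = \coker f \sqcup \coker h$, whence $\delta$ collapses to the zero morphism and the snake sequence degenerates into a concatenation of two disjoint short exact sequences glued at zero.
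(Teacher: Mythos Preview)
The paper does not actually prove the classical Snake Lemma; it only recalls it as motivation before proving the ACGW analogues (Theorems~\ref{thm:weaksnakelemma} and~\ref{thm:snakelemma}). So your proposal should be compared against those proofs, and against them it contains a genuine gap.

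Your ``key observation'' is the problem. You assert that the levelwise complements of $Z_\dotp$ in $X_\dotp$ and in $Y_\dotp$ yield short exact sequences
\[
\ker f \mrto \ker g \elto \ker h
\qquad\text{and}\qquad
\coker f \mrto \coker g \elto \coker h.
\]
This is false: $X_\dotp \bbackslash Z_\dotp$ is a chain complex, but there is no reason for it to be \emph{exact}. Indeed, the entire content of the Snake Lemma is that these two three-term sequences are \emph{not} short exact in general, and the connecting morphism measures exactly that failure. In the paper's notation (Theorem~\ref{thm:weaksnakelemma}) the cokernel of $A\bbackslash X \mrto B\bbackslash Y$ is the auxiliary object $D$, not $C\bbackslash Z$; one then needs a further map $D \mrto C\bbackslash Z$ whose cokernel is the object $W$, and $W$ is precisely the image of the connecting morphism. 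Your argument skips the construction of $D$, $W$, $D'$ entirely.

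The same error resurfaces in your finite-set discussion: the chain-map conditions do \emph{not} force $Z_g = Z_f \sqcup Z_h$. The middle row $X \mrto Y \elto Z$ of \eqref{eqn.weaksnakeinput} is a chain complex but not an exact one, and its homology $W = (Y\fforwardslash X)\bbackslash Z$ is exactly the obstruction to your claimed decomposition. So $\delta$ does not collapse to zero; the picture in \eqref{eqn.setsnake} shows $W$ as a genuinely nonempty piece sitting between $C\bbackslash Z$ and $A'\fforwardslash X$. The paper's proof works by taking kernels and cokernels of the upper-left pseudo-commutative square to produce $D$, then invoking the kernel--cokernel duality (via \cite[Lemma~2.10]{CZ-cgw}) to obtain the map $D \mrto C\bbackslash Z$ and identify $W$ as the homology of $X \mrto Y \elto Z$; the construction of $D'$ is dual. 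This is the step your plan is missing.
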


In the case of finite sets, it will be easier to prove the version with a stronger assumption first, where we start with two short exact sequences. Using the description of short exact sequences from  Example \ref{ex:shortexactseq} and our definition of morphisms, we find that the statement of the Snake Lemma translates to the following. 

\begin{theorem}[Weak Snake Lemma for ACGW categories]\label{thm:weaksnakelemma}
Given a morphism of short exact sequences in $\bbA$ as in \eqref{eqn.weaksnakeinput},
    \begin{diagram-numbered}{eqn.weaksnakeinput}
    {A  & B & C\\
    X & Y & Z\\
     A' & B' & C'\\};
    \mto{1-1}{1-2} \eto{1-3}{1-2} 
    \mto{2-1}{2-2} \eto{2-3}{2-2} 
    \mto{3-1}{3-2} \eto{3-3}{3-2} 

    \eto{2-1}{1-1} \mto{2-1}{3-1}
    \eto{2-2}{1-2} \mto{2-2}{3-2}
    \eto{2-3}{1-3} \mto{2-3}{3-3}

    \comm{1-1}{2-2} \comm{2-2}{3-3}
    \end{diagram-numbered}
    there is an exact sequence of the form in \eqref{eqn.weaksnakesequence}.
    \begin{diagram-numbered}{eqn.weaksnakesequence}
    {  &  &[-10] D &[-10] &[-10] W &[-10] &[-10] D' &[-10] \\
    A\bbackslash X  & B\bbackslash Y & & C\bbackslash Z & & A' \fforwardslash  X & & B' \fforwardslash  Y & C' \fforwardslash  Z\\};
    \mto{2-1}{2-2} \eto{1-3}{2-2} 
    \mto{1-3}{2-4} \eto{1-5}{2-4} \mto{1-5}{2-6}
    \eto{1-7}{2-6} \mto{1-7}{2-8} \eto{2-9}{2-8}
    \end{diagram-numbered}
\end{theorem}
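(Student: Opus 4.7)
The plan is to build the six-term sequence of \eqref{eqn.weaksnakesequence} by systematic application of the ACGW complement functors to \eqref{eqn.weaksnakeinput}, using the two marked pseudo-commutative squares to govern how the pieces fit together. The key structural inputs come first: the pseudo-commutative top-left square together with the horizontal short exact sequence $A \mrto B \elto C$ identifies $X$ with the ACGW pullback $A \oslash_B Y$ (so that $X = A \cap Y$ in the sets picture), giving a decomposition of $B \bbackslash Y$ into the ``disjoint pieces'' $A \bbackslash X$ and $C \bbackslash Z$. Dually, the bottom-right square together with the short exact sequence $A' \mrto B' \elto C'$ identifies $Z$ with $Y \oslash_{B'} C'$ and yields a decomposition of $B' \fforwardslash Y$ as $A' \fforwardslash X$ combined with $C' \fforwardslash Z$. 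These two decompositions drive everything that follows.

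Using them, I construct the six morphisms in order. The horizontal map $A \bbackslash X \mrto B \bbackslash Y$ arises by applying the kernel functor $k$ to the top-left pseudo-commutative square along $Y \erto B$. The span $B \bbackslash Y \elto D \mrto C \bbackslash Z$ has $D = C \bbackslash Z$ together with the inclusions coming from the decomposition of $B \bbackslash Y$, so that it expresses the ``projection onto the $C \bbackslash Z$ summand''. Dually, the span $A' \fforwardslash X \elto D' \mrto B' \fforwardslash Y$ comes from the bottom-right square via the complement functor $c$, and the vertical $C' \fforwardslash Z \erto B' \fforwardslash Y$ is the other summand of the bottom decomposition. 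The connecting span $C \bbackslash Z \elto W \mrto A' \fforwardslash X$ is the most subtle: I build $W$ as an ACGW pullback inside the original diagram, formally encoding the ``lift into $B$, cross to $B'$, land in $A'$'' path through iterated $\oslash$-constructions. In the sets case $W = \varnothing$, since no element of $C \bbackslash Z$ lies in $Y$ and therefore nothing can be lifted; in the abelian case $W$ recovers the image of the classical Snake connecting homomorphism.

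Given these maps, the chain complex and exactness conditions follow mechanically. The chain complex condition $d_i d_{i+1} = 0$ at each of the four internal vertices is a disjointness statement between complementary summands of one of the two decompositions, immediate in each case. Exactness at $B \bbackslash Y$ and $B' \fforwardslash Y$ is immediate from the decompositions (the incoming image covers one summand exactly, and the outgoing partial domain is the other); exactness at $C \bbackslash Z$ and $A' \fforwardslash X$ follows because $W$ is designed so that the image of the previous map coincides with the partial domain of the next.

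The main obstacle I anticipate is the construction of $W$ in full ACGW generality, together with verifying that all the necessary pseudo-commutative squares incident to it exist. The trouble is that $W$ must simultaneously vanish for sets and reproduce the classical Snake connecting morphism for abelian categories, so the construction cannot rely on element chasing and must instead be expressed purely via iterated pullbacks and complements. My strategy is to carry this out by first upgrading both decompositions of paragraph 1 into full pseudo-commutative prisms, then defining $W$ as the pullback of those prisms against the ``middle'' of the original square diagram, and finally verifying the two pseudo-commutative faces at $W$ via the duality encoded in the ACGW axiom \eqref{eq:squareduality}.
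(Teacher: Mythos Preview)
Your proposal contains a genuine error in the set-theoretic analysis that propagates through the whole construction. You assert that the top-left pseudo-commutative square forces $X = A \oslash_B Y$ and hence $B \bbackslash Y$ decomposes as $(A \bbackslash X)$ together with $(C \bbackslash Z)$. The first identification is correct, but the second is not: only the top-left and bottom-right squares in \eqref{eqn.weaksnakeinput} are pseudo-commutative (pullbacks in $\FFinSet$), while the top-right square of vertical morphisms is merely commutative. Consequently $Z \subseteq C \cap Y$ with no equality in general, so the correct decomposition is $B \bbackslash Y \cong (A \bbackslash X) \sqcup \big(C \bbackslash (Y \fforwardslash X)\big)$, and $D = C \bbackslash (Y \fforwardslash X)$ embeds properly into $C \bbackslash Z$.

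This is exactly why the connecting object $W$ is \emph{not} empty for sets: in the paper's proof $W = (Y \fforwardslash X) \bbackslash Z$, the homology of the middle row $X \mrto Y \elto Z$, which is nonzero whenever $Z \subsetneq C \cap Y$. Your reasoning that ``no element of $C \bbackslash Z$ lies in $Y$'' fails precisely on the elements of $(C \cap Y) \backslash Z$. The red flag here is your own observation that $W$ should vanish for sets but not for abelian categories; since the ACGW formalism is meant to treat these uniformly, the construction of $W$ must be the same in both, and indeed it is. The paper builds $W$ directly as the double complement $(Y \fforwardslash X) \bbackslash Z \cong (Y \bbackslash Z) \fforwardslash X$ and identifies this symmetry with the homology construction of Definition~\ref{defn:homology}; you should do the same rather than trying to realize $W$ as an iterated $\oslash$-pullback.
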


Before we prove this, we can summarize the essence of the proof with the picture on the left in \eqref{eqn.setsnake} which shows this morphism of short exact sequences in the ACGW category $\FFinSet$.
\begin{equation}\label{eqn.setsnake}
\raisebox{-.5\height}{\includegraphics[height=4cm]{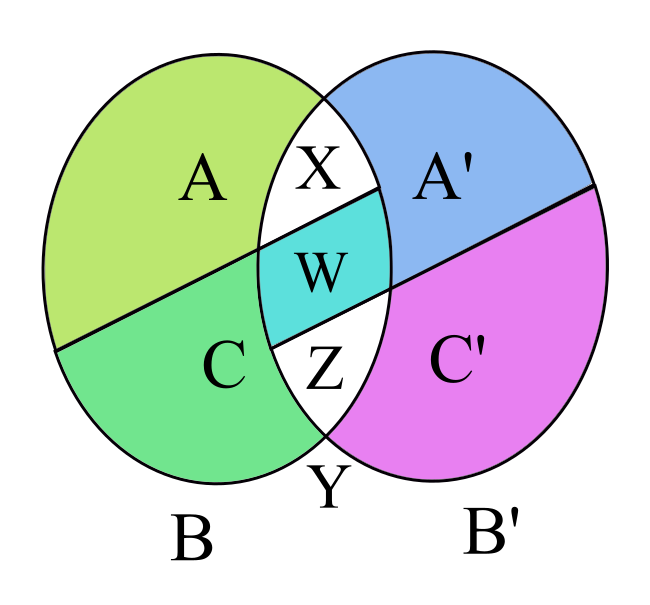}}
\qquad\qquad
\raisebox{-.45\height}{\includegraphics[height=3cm]{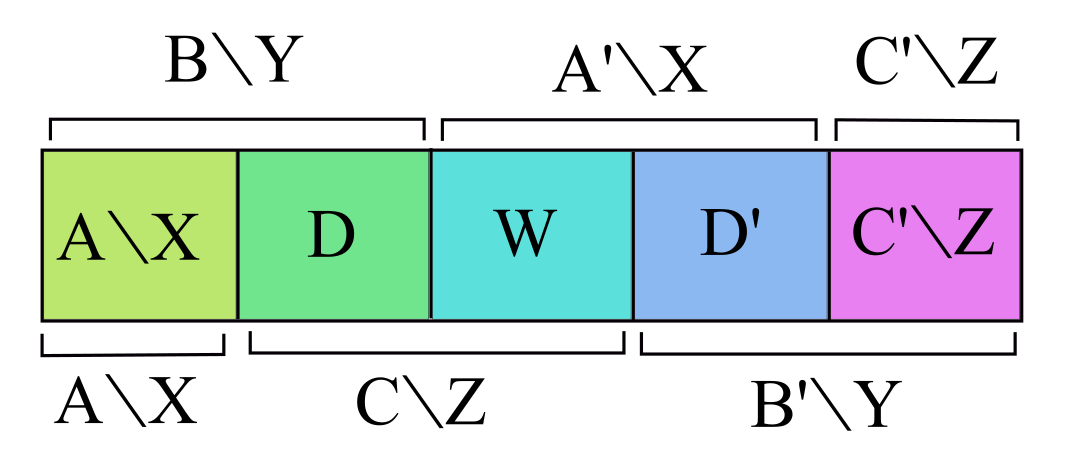}}
\end{equation}
The exact sequence, which as in Example \ref{exactcomplexsets} is a sequence of subsets in the diagram whose successive unions agree with the bottom row of the zigzag diagram in the theorem, is displayed in the pictures as the sequence $A \backslash X, D=C \backslash (Y \backslash X), W=(Y \backslash X) \backslash Z, D'=A' \backslash (Y \backslash Z), C' \backslash Z$. 

The goal of the proof is therefore to construct $D,W,D'$ in the ACGW language along with the desired maps to $B \bbackslash Y, C \bbackslash Z, A'  \fforwardslash  X,B'  \fforwardslash  Y$.

\begin{proof}
    Taking the kernel and cokernel of the top-left square in the diagram in both directions we get a diagram of pullbacks and pseudo-commutative squares as in \eqref{eqn.prooffactoredmorphism},
    \begin{diagram-numbered}{eqn.prooffactoredmorphism}
        { A \bbackslash X & B \bbackslash Y & D & C \bbackslash Z\\
          A & B & C & \\
          X & Y & Y  \fforwardslash  X & Z \\ };
        \comm{1-2}{2-3} \comm{2-1}{3-2}
        \mto{1-1}{1-2} \mto{2-1}{2-2} \mto{3-1}{3-2}
        \mto{1-1}{2-1} \mto{1-2}{2-2} \mto{1-3}{2-3}
        \eto{1-3}{1-2} \eto{2-3}{2-2} \eto{3-3}{3-2}
        \eto{3-1}{2-1} \eto{3-2}{2-2} \eto{3-3}{2-3}
        \eto{3-4}{3-3} \eto{3-4}{2-3} 
        \mto{1-3}{1-4} \mto{1-4}{2-3}
    \end{diagram-numbered}
    where the top row contains the beginning of our desired exact sequence. The horizontal morphism $D \mrto C \bbackslash Z$ is induced by the vertical morphism $Y  \fforwardslash  X \elto Z$ as $D \cong C \bbackslash (Y  \fforwardslash  X)$, based on the diagram  of kernels and cokernels in \eqref{eqn.210}, where we denote $W=(Y\fforwardslash X)\bbackslash Z$.\footnote{See \cite[Lemma 2.10]{CZ-cgw} for more detail.}
    \begin{diagram-numbered}{eqn.210}
        { D & C \bbackslash Z & W \\
          D & C & Y  \fforwardslash  X \\
            & Z & Z \\ };
        \eq{1-1}{2-1} \eq{3-2}{3-3}
        \mto{1-1}{1-2} \mto{2-1}{2-2}
        \mto{1-2}{2-2} \mto{1-3}{2-3}
        \eto{1-3}{1-2} \eto{2-3}{2-2}
        \eto{3-2}{2-2} \eto{3-3}{2-3}
        \comm{1-2}{2-3}
      \end{diagram-numbered}
      In order to show that the kernel of $W \erto C\bbackslash
      Z$ is naturally isomorphic to the kernel of $Y \fforwardslash X \erto C$, it suffices to check that
      the cokernel of $W \mrto Y \fforwardslash X$ is naturally isomorphic to the cokernel of $C
      \bbackslash Z \mrto C$.  (This is because the upper-right square is
      ``distinguished''; see \cite[Section 2]{CZ-cgw} for a more in-depth discussion.)
      As this is true, we are done; note that the sequence so far is exact by construction.

    The construction of $D'$ is entirely dual to this, namely as $A'  \fforwardslash  (Y \bbackslash Z)$. All that remains then is to show that $W$ can be equivalently constructed as $(Y  \fforwardslash  X) \bbackslash Z$ as above or dually as $(Y \bbackslash Z)  \fforwardslash  X$, but these are the two equivalent definitions of the homology of the sequence $X \mrto Y \elto Z$ from Definition \ref{defn:homology}, so the proof is complete.
\end{proof}

The stronger form of the snake lemma in an ACGW category is as follows, where the short exact sequences are replaced by sequences which are no longer exact at $A$ and $C'$.

\begin{theorem}[Strong Snake Lemma for ACGW categories]\label{thm:snakelemma}
Given a morphism of exact sequences in an ACGW category $\bbA$ as in \eqref{eqn.strongsnakeinput}, 
    \begin{diagram-numbered}{eqn.strongsnakeinput}
    {A & \bar A  & B & C & C\\
    X & X & Y & Z & Z\\
     A' & A' & B' & \bar C' & C'\\};
    \eto{1-2}{1-1} \mto{1-2}{1-3} \eto{1-4}{1-3} \eq{1-4}{1-5}
    \eq{2-2}{2-1} \mto{2-2}{2-3} \eto{2-4}{2-3} \eq{2-4}{2-5}
    \eq{3-1}{3-2} \mto{3-2}{3-3}  \eto{3-4}{3-3} \mto{3-4}{3-5}

    \eto{2-1}{1-1} \mto{2-1}{3-1}
    \eto{2-2}{1-2} \mto{2-2}{3-2}
    \eto{2-3}{1-3} \mto{2-3}{3-3}
    \eto{2-4}{1-4} \mto{2-4}{3-4}
    \eto{2-5}{1-5} \mto{2-5}{3-5}

    \comm{1-2}{2-3} \comm{2-3}{3-4} \comm{2-1}{3-2} \comm{1-4}{2-5}
    \end{diagram-numbered}
    there is an exact sequence of the form in \eqref{eqn.strongsnakesequence}.
    \begin{diagram-numbered}{eqn.strongsnakesequence}
    { &[-25] \bar A \bbackslash X &[-25]  &[-15] D &[-15] &[-15] W &[-15] &[-15] D' &[-15] &[-25] \bar C'  \fforwardslash  Z &[-25] \\
    A\bbackslash X  & & B\bbackslash Y & & C\bbackslash Z & & A'  \fforwardslash  X & & B'  \fforwardslash  Y & & C'  \fforwardslash  Z\\};
     \eto{1-2}{2-1}\mto{1-2}{2-3} \eto{1-4}{2-3} 
    \mto{1-4}{2-5} \eto{1-6}{2-5} \mto{1-6}{2-7}
    \eto{1-8}{2-7} \mto{1-8}{2-9} \eto{1-10}{2-9} \mto{1-10}{2-11}
    \end{diagram-numbered}
\end{theorem}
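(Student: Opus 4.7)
The plan is to reduce the Strong Snake Lemma to the Weak Snake Lemma (Theorem \ref{thm:weaksnakelemma}) and then attach new terms at each end using naturality of kernels and cokernels.

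First, observe that the exactness of the top row at $B$ yields a short exact sequence $\bar A \mrto B \elto C$ in the sense of Example \ref{ex:shortexactseq}, and symmetrically the exactness of the bottom row at $B'$ yields $A' \mrto B' \elto \bar C'$. Combined with the middle short exact sequence $X \mrto Y \elto Z$ and the morphisms $X \erto \bar A$ and $Z \mrto \bar C'$ provided by the pseudo-commutative squares $\comm{1-2}{2-3}$ and $\comm{2-3}{3-4}$ in the given diagram, we obtain a morphism of three short exact sequences. Applying the Weak Snake Lemma produces an exact sequence of the form \eqref{eqn.weaksnakesequence} but with $\bar A$ and $\bar C'$ in place of $A$ and $C'$; this serves as the interior of the claimed sequence \eqref{eqn.strongsnakesequence}.

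To attach the endpoint extensions, we use the composable morphisms present in the data. On the left, the vertical $\bar A \erto A$ (from the top row) composed with $X \erto \bar A$ gives $X \erto A$, and yields a short exact sequence of kernels $A \bbackslash \bar A \mrto A \bbackslash X \elto \bar A \bbackslash X$; the vertical morphism $\bar A \bbackslash X \erto A \bbackslash X$ in this short exact sequence forms the leftmost arrow of \eqref{eqn.strongsnakesequence}. Dually on the right, the horizontal $\bar C' \mrto C'$ composed with $Z \mrto \bar C'$ yields a short exact sequence of cokernels $\bar C' \fforwardslash Z \mrto C' \fforwardslash Z \elto C' \fforwardslash \bar C'$, providing the rightmost horizontal morphism. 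Exactness at the interior main objects $B \bbackslash Y$, $C \bbackslash Z$, $A' \fforwardslash X$, $B' \fforwardslash Y$ is inherited directly from the weak snake output, and the chain conditions at the endpoints $A \bbackslash X$ and $C' \fforwardslash Z$ are vacuous.

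The main obstacle is establishing in general ACGW the short exact sequences of kernels and cokernels used in the endpoint extensions. For $\FFinSet$ these amount to the set-theoretic identity $A \setminus X = (A \setminus \bar A) \sqcup (\bar A \setminus X)$ when $X \subseteq \bar A \subseteq A$, but in general ACGW they must be constructed using the pullback and pseudo-commutative square axioms of Definition \ref{defn:ACGW} applied to the composable verticals $X \erto \bar A \erto A$ and, dually, the composable horizontals $Z \mrto \bar C' \mrto C'$.
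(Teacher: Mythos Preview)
Your proposal is correct and follows essentially the same route as the paper: apply the Weak Snake Lemma to the morphism of short exact sequences $\bar A \mrto B \elto C$, $X \mrto Y \elto Z$, $A' \mrto B' \elto \bar C'$ formed by the middle columns, then append the two endpoint morphisms from the outer columns. The paper phrases the endpoint construction as ``maps induced on kernels and cokernels by the pullback squares in the upper left and lower right corners,'' which is exactly your observation that the composable verticals $X \erto \bar A \erto A$ (and dually the horizontals $Z \mrto \bar C' \mrto C'$) yield the required morphism $\bar A \bbackslash X \erto A \bbackslash X$ (resp.\ $\bar C' \fforwardslash Z \mrto C' \fforwardslash Z$); your final paragraph is more cautious than necessary, since this is a routine consequence of the kernel/cokernel duality axioms (cf.\ \cite[Lemma~2.10]{CZ-cgw}) rather than a genuine obstacle.
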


In this case, the picture in $\FFinSet$ is very similar to that for the weak version of the snake lemma but allows for the exact sequence to continue beyond the union of $B$ and $B'$.
\begin{equation}\label{eqn.strongsnake}
\raisebox{-.5\height}{\includegraphics[height=4cm]{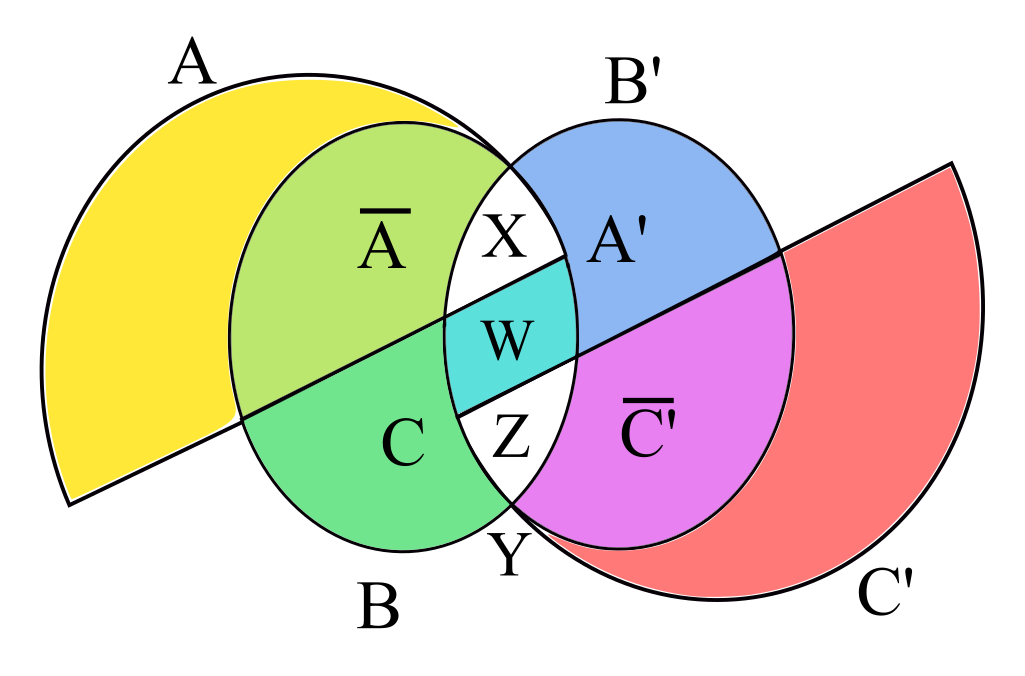}}
\qquad
\raisebox{-.45\height}{\includegraphics[height=2.7cm]{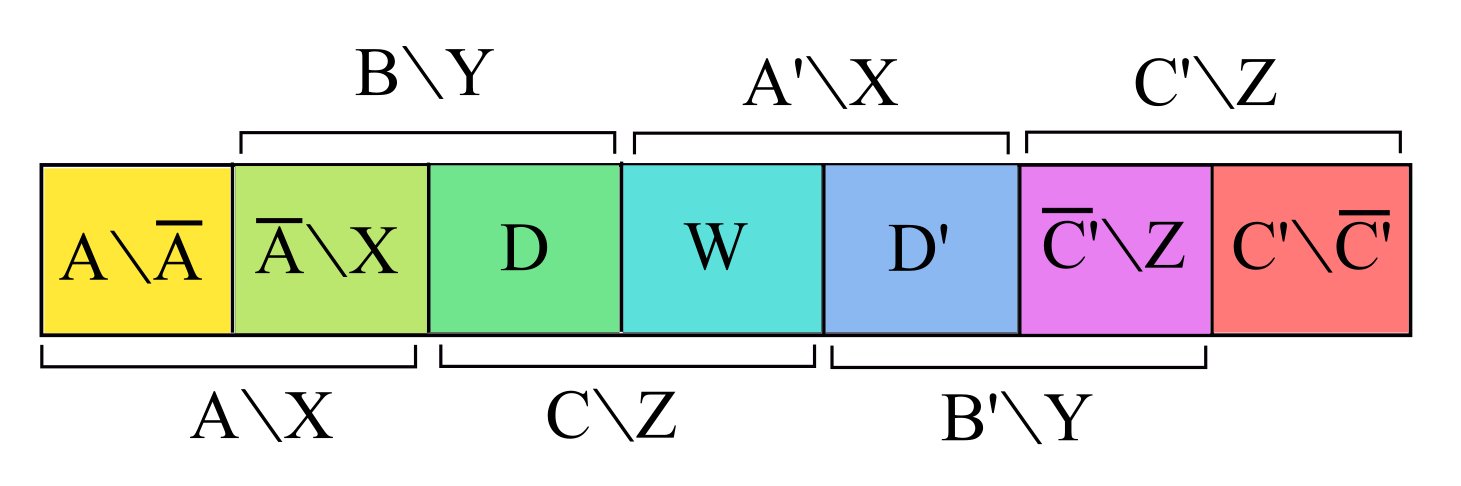}}
\end{equation}

\begin{proof}
    Applying Theorem \ref{thm:weaksnakelemma} to the middle two columns of the diagram gives all but the leftmost and rightmost morphisms in the desired exact sequence. The remaining maps are those induced on kernels and cokernels by the pullback squares in the upper left and lower right corners of the diagram in the theorem statement.
\end{proof}

\subsection{Long exact sequence in homology}

In this subsection, we will show how a short exact sequence of chain complexes induces a long exact sequence on homology. In order to be more precise, and to get the correct formulation in the context of finite sets, we start by recalling the classical result for abelian categories.

\begin{theorem}[Long exact sequence in homology]
Let $\mathcal{A}$ be an abelian category. Then, every short exact sequence of chain complexes in $\mathcal{A}$ \[0\to X_\dotp\rcofib Y_\dotp \rfib Z_\dotp\to 0\] induces a long exact sequence in homology \[\cdots \to H_{i+1}(Z_\dotp)\to H_i(X_\dotp)\to H_i(Y_\dotp)\to H_i(Z_\dotp)\to H_{i-1}(X_\dotp)\to\cdots.\]
\end{theorem}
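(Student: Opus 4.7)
The plan is to mimic the classical strategy and deduce the long exact sequence from repeated application of the weak snake lemma (Theorem \ref{thm:weaksnakelemma}). First I would unpack the input. A short exact sequence of chain complexes $X_\dotp \mrto Y_\dotp \elto Z_\dotp$ in $\CCh_\bbA$ yields, by the ACGW structure on chain complexes, a complementary pair $X_i \mrto Y_i \elto Z_i$ at each level and a complementary pair $\bar X_i \mrto \bar Y_i \elto \bar Z_i$ at each transition, compatible with the boundary inclusions $\bar X_{i+1} \mrto X_i$.

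Next, for each $i$ I would assemble the input diagram for the snake lemma by stacking three complementary pairs: the top row $X_i \fforwardslash \bar X_{i+1} \mrto Y_i \fforwardslash \bar Y_{i+1} \elto Z_i \fforwardslash \bar Z_{i+1}$ obtained by taking cokernels of boundaries; the middle row $\bar X_i \mrto \bar Y_i \elto \bar Z_i$ consisting of the transition objects; and the bottom row $X_{i-1} \bbackslash \bar X_{i-1} \mrto Y_{i-1} \bbackslash \bar Y_{i-1} \elto Z_{i-1} \bbackslash \bar Z_{i-1}$ obtained by taking kernels at level $i-1$. The vertical connectors $\bar X_i \erto X_i \fforwardslash \bar X_{i+1}$ and $\bar X_i \mrto X_{i-1} \bbackslash \bar X_{i-1}$ (and analogously for $Y$ and $Z$) are exactly those produced by the chain condition in the homology diagram \eqref{eqn.homology}.

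Applying Theorem \ref{thm:weaksnakelemma} to this diagram produces a 6-term exact sequence, which under the identifications $\ker(\bar X_i \erto X_i \fforwardslash \bar X_{i+1}) \cong H_i(X_\dotp)$ and $\coker(\bar X_i \mrto X_{i-1} \bbackslash \bar X_{i-1}) \cong H_{i-1}(X_\dotp)$ from Definition \ref{defn:homology} becomes
\[H_i(X_\dotp) \to H_i(Y_\dotp) \to H_i(Z_\dotp) \to H_{i-1}(X_\dotp) \to H_{i-1}(Y_\dotp) \to H_{i-1}(Z_\dotp).\]
Splicing these 6-term sequences over all $i$ yields the long exact sequence; the overlapping morphisms $H_{i-1}(X_\dotp) \to H_{i-1}(Y_\dotp) \to H_{i-1}(Z_\dotp)$ coming from the snake at index $i$ agree with the leading morphisms of the snake at index $i-1$ because both are the functorial images of the given SES under $H_{i-1}$.

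The hard part will be justifying that the top and bottom rows of the snake input are genuinely complementary pairs and that the pseudo-commutative squares required by Theorem \ref{thm:weaksnakelemma} hold. This reduces to showing that the operations $(-)_i \fforwardslash \bar{(-)}_{i+1}$ and $(-)_{i-1} \bbackslash \bar{(-)}_{i-1}$ preserve complementary pairs when applied to the chain-complex SES. In $\FFinSet$ this is immediate, amounting to $(X \sqcup Z) \backslash (X' \sqcup Z') = (X \backslash X') \sqcup (Z \backslash Z')$ for disjoint subsets, but in a general ACGW category it requires careful application of the extension of $c$ and $k$ to squares from \eqref{eq:squareduality}, most naturally via a $3\times3$-style argument that simultaneously tracks the cokernel of $\bar X_{i+1} \mrto X_i$ and the cokernel of $X_i \mrto Y_i$.
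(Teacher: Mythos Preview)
Your overall strategy is right, but there is a genuine gap at exactly the point you flag as ``the hard part,'' and your proposed resolution of it is incorrect even in $\FFinSet$.

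You assert that the top row $X_i \fforwardslash \bar X_{i+1} \mrto Y_i \fforwardslash \bar Y_{i+1} \elto Z_i \fforwardslash \bar Z_{i+1}$ is a complementary pair, and that the transition row $\bar X_i \mrto \bar Y_i \elto \bar Z_i$ is one as well. Neither holds. Recall (Example~\ref{setchainmaps}) that complements of chain maps are \emph{not} computed levelwise on the intersections: $\bar Y_i$ may strictly contain $\bar X_i \cup \bar Z_i$. Concretely, the pseudo-commutative squares in the definitions of horizontal and vertical chain morphisms give $\bar X_{i+1} = X_{i+1} \oslash_{Y_{i+1}} \bar Y_{i+1}$ and $\bar Z_{i+1} = Z_i \oslash_{Y_i} \bar Y_{i+1}$, so inside $Y_i$ one has $\bar Y_{i+1} \cap X_i = \bar Y_{i+1} \bbackslash \bar Z_{i+1}$, which is in general strictly larger than $\bar X_{i+1}$. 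Consequently $X_i \backslash \bar X_{i+1}$ need not land in $Y_i \backslash \bar Y_{i+1}$ at all: your leftmost horizontal arrow in the top row does not exist as a horizontal morphism, and the identity $(X \sqcup Z)\backslash(X' \sqcup Z') = (X\backslash X') \sqcup (Z\backslash Z')$ you invoke fails because $\bar X_{i+1} \sqcup \bar Z_{i+1} \subsetneq \bar Y_{i+1}$. The Weak Snake Lemma therefore cannot be applied to your diagram.

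This is precisely why the paper invokes the \emph{Strong} Snake Lemma (Theorem~\ref{thm:snakelemma}) rather than the weak one. The correct object in the top-left is $X_i \fforwardslash (\bar Y_{i+1} \bbackslash \bar Z_{i+1})$, which \emph{does} admit a horizontal morphism to $Y_i \fforwardslash \bar Y_{i+1}$ (coming from the horizontal pullback square in \eqref{eqn.chainLESdiagram}), and there is then a further vertical morphism $X_i \fforwardslash (\bar Y_{i+1} \bbackslash \bar Z_{i+1}) \erto X_i \fforwardslash \bar X_{i+1}$ induced by the inclusion $\bar X_{i+1} \mrto \bar Y_{i+1} \bbackslash \bar Z_{i+1}$. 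This extra leg on the left (and its dual on the bottom-right) is exactly the additional data accommodated by the strong form of the lemma, and is what produces the correct kernel $H_i(X_\dotp)$ rather than the smaller $(X_i \fforwardslash (\bar Y_{i+1} \bbackslash \bar Z_{i+1})) \bbackslash \bar X_i$.
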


Recall that a short exact sequence of chain complexes in an ACGW category $\bbA$ is a pair of morphisms $X_\dotp \mrto Y_\dotp \elto Z_\dotp$ such that $X_\dotp$ is isomorphic to the kernel of the vertical morphism or, equivalently, $Z_\dotp$ is isomorphic to the cokernel of the horizontal morphism. As we discussed in Example \ref{setchainmaps}, in $\FFinSet$ this short exact sequence can be visualized as in the picture in \eqref{eqn.setSES2}, where each set $Y_i$ is partitioned into $X_i$ and $Z_i$ in a manner consistent with the respective horizontal and vertical morphisms of chain complexes.
\begin{equation}\label{eqn.setSES2}
\raisebox{-.5\height}{\includegraphics[height=4cm]{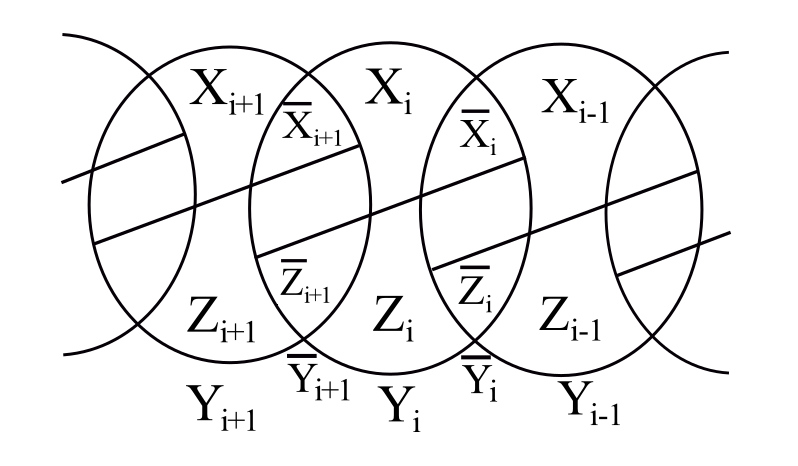}}
\end{equation}

 \begin{theorem}[Long exact sequence in homology for ACGW categories]\label{thm:lesforhomology}
 An exact sequence of chain complexes in $\bbA$ \[ X_\dotp \mrto Y_\dotp \elto Y_\dotp\fforwardslash X_\dotp \]
     induces a long exact sequence (i.e. an exact chain complex) in homology as in \eqref{eqn.LESpair}.
    \begin{diagram-numbered}{eqn.LESpair}
    { &[-30] \dotp &[-10]  &[-10] \dotp &[-10] &[-10] \dotp &[-20] &[-20] \dotp &[-20]  \\
    \cdots H_{i+1}(Y_\dotp \fforwardslash X_\dotp)  & & H_i(X_\dotp) & & H_i(Y_\dotp) & & H_i(Y_\dotp \fforwardslash X_\dotp) & & H_{i-1}(X_\dotp)\cdots\\};
     \eto{1-2}{2-1}\mto{1-2}{2-3} \eto{1-4}{2-3} 
    \mto{1-4}{2-5} \eto{1-6}{2-5} \mto{1-6}{2-7}
    \eto{1-8}{2-7} \mto{1-8}{2-9}
    \end{diagram-numbered}
 \end{theorem}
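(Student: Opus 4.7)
The plan is to reduce the long exact sequence to repeated applications of the Weak Snake Lemma (Theorem \ref{thm:weaksnakelemma}), one for each index $i$. Writing $Z_\dotp := Y_\dotp \fforwardslash X_\dotp$, the Snake Lemma at level $i$ should produce a six-term exact sequence
\[
H_i(X_\dotp) \to H_i(Y_\dotp) \to H_i(Z_\dotp) \to H_{i-1}(X_\dotp) \to H_{i-1}(Y_\dotp) \to H_{i-1}(Z_\dotp),
\]
and concatenating these across all $i$ will yield the desired LES as an exact chain complex in $\bbA$.

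To set up the Snake Lemma input at level $i$, I would take the top row to be the quotient short exact sequence $X_i \fforwardslash \bar X_{i+1} \mrto Y_i \fforwardslash \bar Y_{i+1} \elto Z_i \fforwardslash \bar Z_{i+1}$, the middle row to be the level-$i$ image short exact sequence $\bar X_i \mrto \bar Y_i \elto \bar Z_i$, and the bottom row to be the level-$(i-1)$ kernel short exact sequence $X_{i-1} \bbackslash \bar X_{i-1} \mrto Y_{i-1} \bbackslash \bar Y_{i-1} \elto Z_{i-1} \bbackslash \bar Z_{i-1}$. The vertical morphism $\bar X_i \erto X_i \fforwardslash \bar X_{i+1}$ from the middle to the top row (and likewise for $Y$ and $Z$) arises from the horizontal complement of the chain-condition square $\csq{\varnothing}{\bar X_i}{\bar X_{i+1}}{X_i}{}{}{}{}$; dually, the horizontal morphism $\bar X_i \mrto X_{i-1} \bbackslash \bar X_{i-1}$ from the middle to the bottom row comes from the vertical complement of $\csq{\varnothing}{\bar X_{i-1}}{\bar X_i}{X_{i-1}}{}{}{}{}$. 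By Definition \ref{defn:homology}, the kernel and cokernel (in $\bbA^\flat$) of the resulting span from top to bottom in each column are precisely $H_i$ and $H_{i-1}$, so the Snake Lemma's six-term output is exactly the desired sequence.

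The main obstacle is verifying the Snake Lemma's hypotheses, namely that the top and bottom rows are indeed short exact sequences in $\bbA$ and that the top-left and bottom-right $2\times 2$ subsquares are pseudo-commutative. This amounts to showing that quotients and complements of SESs along the compatible structure coming from a SES of chain complexes preserve short exactness; in $\FFinSet$, the essential facts are the identity $\bar X_{i+1} = X_i \cap \bar Y_{i+1}$ (as subsets of $Y_i$) together with the disjointness $\bar X_{i+1} \cap \bar Y_i = \varnothing$, and in the general ACGW setting these follow from the naturality of the complement functors $c$ and $k$. Once the Snake Lemma is applied at each level, to splice the six-term sequences into a single exact chain complex one verifies that the three-term chunk $H_{i-1}(X_\dotp) \to H_{i-1}(Y_\dotp) \to H_{i-1}(Z_\dotp)$ appearing at the end of the snake output at level $i$ agrees with the one appearing at the beginning of the snake output at level $i-1$; by the functoriality of homology (Theorem \ref{thm:functorialhomology}), both descriptions give the map induced on $H_{i-1}$ by the morphisms $X_\dotp \mrto Y_\dotp$ and $Y_\dotp \elto Z_\dotp$, so the splicing is consistent and exactness at every term of the resulting complex is inherited from the individual Snake Lemma outputs.
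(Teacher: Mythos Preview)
Your overall shape is right --- set up a Snake-Lemma input at each level $i$ with middle row $\bar X_i \mrto \bar Y_i \elto \bar Z_i$ and read off $H_i$ and $H_{i-1}$ as the kernel and cokernel rows --- but the Weak Snake Lemma does not apply, because your top and bottom rows are not short exact sequences. The claimed ``essential fact'' $\bar X_{i+1} = X_i \cap \bar Y_{i+1}$ (in $Y_i$) is false: the pseudo-commutative square in the definition of a horizontal chain morphism forces $\bar X_{i+1} = X_{i+1} \cap \bar Y_{i+1}$ as subsets of $Y_{i+1}$, whereas the square relating $\bar X_{i+1}$ to $X_i$ is only commutative, giving merely $\bar X_{i+1} \subseteq X_i \cap \bar Y_{i+1}$. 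In the two-term example $Y_{i+1}=\{a\}$, $Y_i=\{b\}$, $\bar Y_{i+1}=\{a\mapsto b\}$, with $X_{i+1}=\varnothing$ and $X_i=\{b\}$, one has $\bar X_{i+1}=\varnothing$ but $X_i \cap \bar Y_{i+1}=\{b\}$; here $b \in X_i \backslash \bar X_{i+1}$ but $b \notin Y_i \backslash \bar Y_{i+1}$, so there is no horizontal morphism $X_i \fforwardslash \bar X_{i+1} \mrto Y_i \fforwardslash \bar Y_{i+1}$ at all. Dually, your bottom row fails on the $Z$-side. (Relatedly, $\bar X_i \mrto \bar Y_i \elto \bar Z_i$ itself need not be short exact, as Example~\ref{setchainmaps} already points out.)

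The paper's proof repairs exactly this defect by invoking the \emph{Strong} Snake Lemma (Theorem~\ref{thm:snakelemma}) rather than the weak one. Instead of $X_i \fforwardslash \bar X_{i+1}$, the relevant corner is $X_i \fforwardslash (\bar Y_{i+1} \bbackslash \bar Z_{i+1})$, which genuinely equals $X_i \cap (Y_i \backslash \bar Y_{i+1})$ and therefore sits in a short exact sequence with $Y_i \fforwardslash \bar Y_{i+1}$ and $Z_i \fforwardslash \bar Z_{i+1}$. The gap between this object and $X_i \fforwardslash \bar X_{i+1} = H_i(X_\dotp) \cup \bar X_i$ is absorbed by the extra tail $A \elto \bar A$ in the Strong Snake input, and that is precisely what produces the correct kernel term $H_i(X_\dotp)$ in the output. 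The dual correction on the bottom-right handles $H_{i-1}(Z_\dotp)$. A secondary issue: your splicing step appeals to Theorem~\ref{thm:functorialhomology}, but in the paper that theorem is proved \emph{using} Theorem~\ref{thm:lesforhomology} (via Proposition~\ref{prop:homologyonmaps}), so invoking it here would be circular; the paper instead checks the overlap directly.
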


 Setting $Z_\dotp \coloneqq Y_\dotp \fforwardslash X_\dotp$, this exact sequence can be visualized as in the pictures in \eqref{eqn.setLESpair}.
 \begin{equation}\label{eqn.setLESpair}
 \raisebox{-.5\height}{\includegraphics[height=4cm]{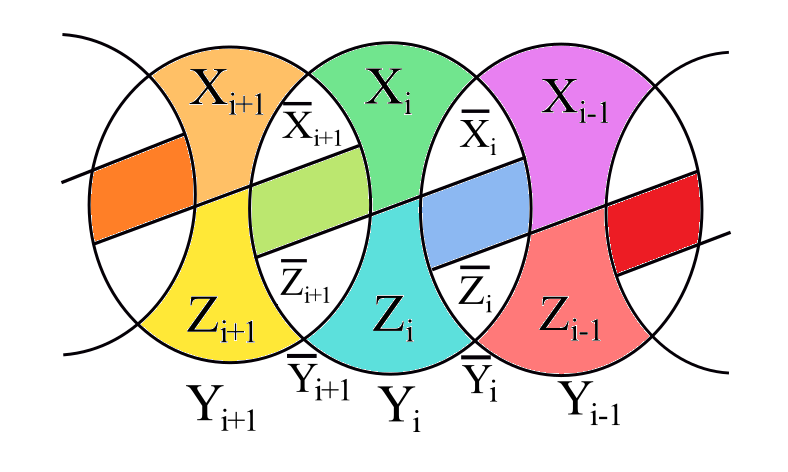}}
 \qquad
 \raisebox{-.45\height}{\includegraphics[height=3cm]{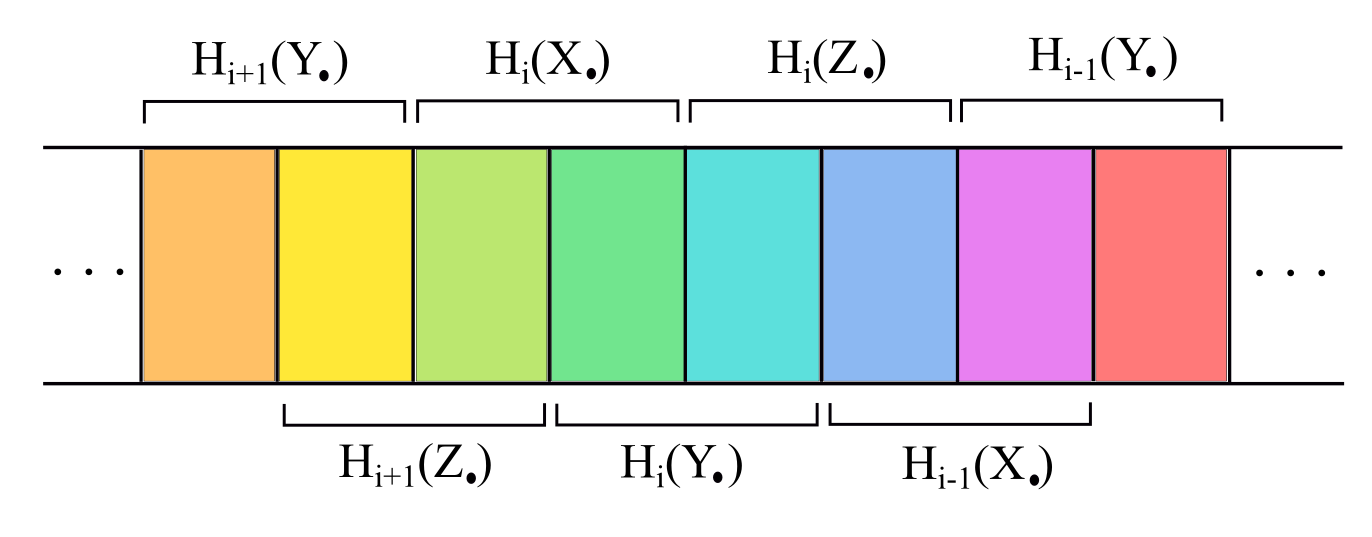}}
 \end{equation}
 
 \begin{proof}
    Let $Z_\dotp \coloneqq Y_\dotp \fforwardslash X_\dotp$. The data making up the exact sequence includes the diagram in \eqref{eqn.chainLESdiagram},
    \begin{diagram-numbered}{eqn.chainLESdiagram}
        { \bar X_{i+1} & \bar Y_{i+1} \bbackslash \bar Z_{i+1} & \bar Y_{i+1} & \bar Z_{i+1} & \\
          & X_i & Y_i & Z_i & \\
          & \bar X_i & \bar Y_i & \bar Z_i & \\
          & X_{i-1} & Y_{i-1} & Z_{i-1} & \\
          & \bar X_{i-1} & \bar Y_{i-1} & \bar Y_{i-1}  \fforwardslash  \bar X_{i-1} & \bar Z_{i-1} \\};
        \pbsym{1-2}{2-3}\comm{1-3}{2-4} \comm{2-2}{3-3}
        \comm{3-3}{4-4} \comm{4-2}{5-3} \path[font=\scriptsize] (m-4-3) edge[-,white] node[pos=0.05]   {\textcolor{black}{$\ulcorner$}}  (m-5-4);
        \mto{1-1}{1-2} \mto{1-1}{2-2} 
        \eto{5-5}{5-4} \eto{5-5}{4-4}
        \mto{1-2}{1-3} \eto{1-4}{1-3}
        \mto{2-2}{2-3} \eto{2-4}{2-3}
        \mto{3-2}{3-3} \eto{3-4}{3-3}
        \mto{4-2}{4-3} \eto{4-4}{4-3}
        \mto{5-2}{5-3} \eto{5-4}{5-3}
        \mto{1-2}{2-2} \mto{1-3}{2-3} \mto{1-4}{2-4}
        \eto{3-2}{2-2} \eto{3-3}{2-3} \eto{3-4}{2-4}
        \mto{3-2}{4-2} \mto{3-3}{4-3} \mto{3-4}{4-4}
        \eto{5-2}{4-2} \eto{5-3}{4-3} \eto{5-4}{4-4}
    \end{diagram-numbered}
    which by taking cokernels of the top row and kernels of the bottom row induces a diagram as in \eqref{eqn.LESsnakediagram}.
     \begin{diagram-numbered}[2em]{eqn.LESsnakediagram}
    { X_i  \fforwardslash  \bar X_{i+1} &[-10] X_i  \fforwardslash  \left(\bar Y_{i+1} \bbackslash \bar Z_{i+1}\right) &[-10] Y_i  \fforwardslash  \bar Y_{i+1} & Z_i  \fforwardslash  \bar Z_{i+1} &[-5] Z_i \fforwardslash \bar Z_{i+1} \\
      \bar X_i & \bar X_i & \bar Y_i & \bar Z_i & \bar Z_i \\
      X_{i-1} \bbackslash \bar X_{i-1} & X_{i-1} \bbackslash \bar X_{i-1} & Y_{i-1} \bbackslash \bar Y_{i-1} & Z_{i-1} \bbackslash \bar Z_{i-1} & Z_{i-1} \bbackslash \left(\bar Y_{i-1}  \fforwardslash  \bar X_{i-1}\right) \\};
    \eto{1-2}{1-1} \mto{1-2}{1-3} \eto{1-4}{1-3} \eq{1-4}{1-5}
    \eq{2-2}{2-1} \mto{2-2}{2-3} \eto{2-4}{2-3} \eq{2-4}{2-5}
    \eq{3-1}{3-2} \mto{3-2}{3-3}  \eto{3-4}{3-3} \mto{3-4}{3-5}

    \eto{2-1}{1-1} \mto{2-1}{3-1}
    \eto{2-2}{1-2} \mto{2-2}{3-2}
    \eto{2-3}{1-3} \mto{2-3}{3-3}
    \eto{2-4}{1-4} \mto{2-4}{3-4}
    \eto{2-5}{1-5} \mto{2-5}{3-5}
    
    \comm{1-2}{2-3} \comm{2-3}{3-4} \comm{2-1}{3-2} \comm{1-4}{2-5}
    \end{diagram-numbered}
    Here, the vertical map $ X_i  \fforwardslash  \left(\bar Y_{i+1} \bbackslash \bar Z_{i+1}\right)\erto X_i  \fforwardslash  \bar X_{i+1}$ is induced by the horizontal map $\bar X_{i+1} \mrto \bar Y_{i+1} \bbackslash \bar Z_{i+1}$ in \eqref{eqn.chainLESdiagram} by using \cite[Lemma 2.10]{CZ-cgw} in a manner identical to that of the proof of Theorem \ref{thm:weaksnakelemma}. 
    
    This diagram is precisely of the form required in Theorem \ref{thm:snakelemma}, and by definition of homology (Definition \ref{defn:homology}) the resulting exact sequence is between the homologies of the chain complexes as in \eqref{eqn.LESpairproof}.
    \begin{diagram-numbered}{eqn.LESpairproof}
    { &[-15] \dotp &[-15]  &[-15] \dotp &[-15] &[-15] \dotp &[-20] &[-20] \dotp &[-20] &[-20] \dotp &[-20] \\
    H_i(X_\dotp) & & H_i(Y_\dotp) & & H_i(Y_\dotp) & & H_{i-1}(X_\dotp) & & H_{i-1}(Y_\dotp) & & H_{i-1}(Z_\dotp) \\};
     \eto{1-2}{2-1}\mto{1-2}{2-3} \eto{1-4}{2-3} 
    \mto{1-4}{2-5} \eto{1-6}{2-5} \mto{1-6}{2-7}
    \eto{1-8}{2-7} \mto{1-8}{2-9} \eto{1-10}{2-9} \mto{1-10}{2-11}
    \end{diagram-numbered}
    Piecing together these diagrams for all $i$, which can be straightforwardly checked to agree on the overlapping portions, gives the desired long exact sequence.
 \end{proof}

To further illustrate how the strong version of the Snake Lemma applies here in the case of $\FFinSet$, in \eqref{eqn.snakeforLES} is a picture of the diagram Theorem \ref{thm:snakelemma} was applied to in the proof, colored analogously to \eqref{eqn.strongsnake}.
\begin{equation}\label{eqn.snakeforLES}
\raisebox{-.5\height}{\includegraphics[height=3.5cm]{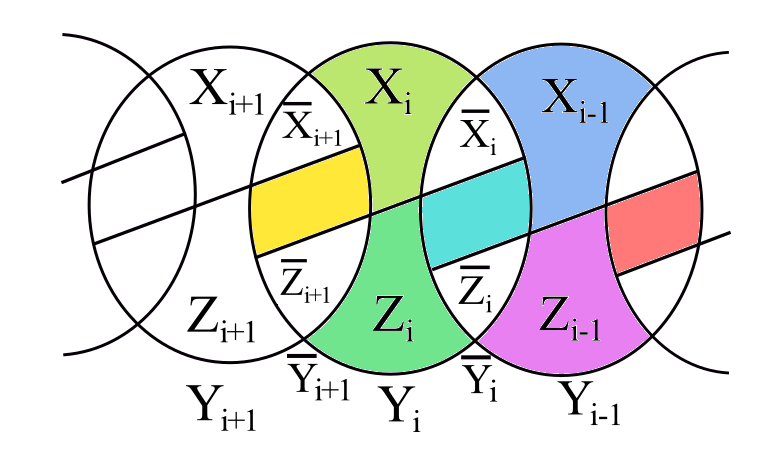}}
\end{equation}

\subsection{Homology as a functor}

In Section \ref{section:homologydefn}, we defined homology as an assignment that takes a chain complex $X_\dotp$ in an ACGW category $\bbA$ and produces an object $H_i(X_\dotp)$ of $\bbA$ for each $i$. In fact, this assignment extends to maps in a functorial way, as we show in this subsection. 

\begin{proposition}\label{prop:homologyonmaps}
    A morphism of chain complexes  $f_\dotp\colon X_\dotp\to Y_\dotp$ in $\Ch_\bbA$
    induces a morphism in $\bbA^\flat$ \[H_i(X_\dotp)\elto \dotp \mrto H_i(Y_\dotp)\]  
\end{proposition}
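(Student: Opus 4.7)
The plan is to exploit the flattening isomorphism of Lemma~\ref{lemma:chainmap}, which presents any morphism of chain complexes $f_\dotp$ as a span $X_\dotp \elto Z_\dotp \mrto Y_\dotp$ of a vertical morphism followed by a horizontal morphism in $\CCh_\bbA$. The proposition therefore reduces to two symmetric claims: a horizontal morphism of chain complexes induces a horizontal morphism on $H_i$, and dually a vertical morphism of chain complexes induces a vertical morphism on $H_i$. Assembling these yields the span $H_i(X_\dotp) \elto H_i(Z_\dotp) \mrto H_i(Y_\dotp)$ representing the desired morphism in $\bbA^\flat$.

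For the horizontal case, consider a horizontal morphism $g_\dotp \colon Z_\dotp \mrto Y_\dotp$ in $\CCh_\bbA$. Unpacking the definition of this structure, at each level $i$ one has a pseudo-commutative square whose top and bottom edges are the vertical halves of the differentials $\bar Z_i \erto Z_i$ and $\bar Y_i \erto Y_i$, and whose left and right edges are the horizontal morphisms $\bar Z_i \mrto \bar Y_i$ and $Z_i \mrto Y_i$, together with a commutative square relating the horizontal halves $\bar Z_i \mrto Z_{i-1}$ and $\bar Y_i \mrto Y_{i-1}$. Applying the naturality bijection for $k$ (the $k$-analog of~\eqref{eq:squareduality} in Definition~\ref{defn:ACGW}) to the pseudo-commutative square produces a horizontal morphism $Z_i \bbackslash \bar Z_i \mrto Y_i \bbackslash \bar Y_i$ as the top of a new cartesian square. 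Next, as in the proof of Theorem~\ref{thm:weaksnakelemma}, I would invoke \cite[Lemma~2.10]{CZ-cgw} on the commutative square comparing $\bar Z_{i+1} \mrto Z_i$ with $\bar Y_{i+1} \mrto Y_i$ to transport this along the cokernel construction, yielding a horizontal morphism $(Z_i \bbackslash \bar Z_i) \fforwardslash \bar Z_{i+1} \mrto (Y_i \bbackslash \bar Y_i) \fforwardslash \bar Y_{i+1}$. By Definition~\ref{defn:homology} this is exactly a horizontal morphism $H_i(Z_\dotp) \mrto H_i(Y_\dotp)$.

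The vertical case proceeds dually: the pseudo-commutative square appearing in the definition of a vertical morphism of chain complexes lives on the horizontal halves of the differentials $\bar Z_{i} \mrto Z_{i-1}$ and $\bar X_i \mrto X_{i-1}$, while the commutative square compares the vertical halves. One applies the naturality bijection for $c$ from~\eqref{eq:squareduality}, transports the result along the kernel construction using the same \cite[Lemma~2.10]{CZ-cgw} trick, and concludes via the equivalent expression $H_i(-) = (-_i \fforwardslash \bar{(-)}_{i+1}) \bbackslash \bar{(-)}_i$ recorded in Definition~\ref{defn:homology}.

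The main obstacle I anticipate is diagrammatic bookkeeping: carefully distinguishing the genuinely pseudo-commutative squares in each type of chain morphism (to which the naturality bijections of Definition~\ref{defn:ACGW} apply directly) from the merely commutative squares (where the detour through \cite[Lemma~2.10]{CZ-cgw} is required), and then checking that the resulting span $H_i(X_\dotp) \elto H_i(Z_\dotp) \mrto H_i(Y_\dotp)$ is well-defined in $\bbA^\flat$, i.e.\ independent of the chosen representative span for $f_\dotp$. This last step is essentially the assertion that $H_i$ extends to a double functor $\CCh_\bbA \to \bbA$ whose flattening is the desired functor; functoriality under composition of chain maps can then be deferred to (and will be established in) Theorem~\ref{thm:functorialhomology}.
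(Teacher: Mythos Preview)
Your approach has a genuine gap: the central claim that a horizontal morphism of chain complexes induces a \emph{horizontal} morphism on homology is false. The paper provides an explicit counterexample immediately after this proposition: for the horizontal morphism in \eqref{eqn.spanex} one computes $H_2(X_\dotp)=\{a\}$ and $H_2(Y_\dotp)=\varnothing$, so no horizontal morphism $H_2(X_\dotp)\mrto H_2(Y_\dotp)$ exists. Consequently $H_i$ is \emph{not} a double functor $\CCh_\bbA\to\bbA$, and the span you propose, with apex exactly $H_i(Z_\dotp)$, cannot be correct in general.

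The step where your argument breaks down is the appeal to \cite[Lemma~2.10]{CZ-cgw} to ``transport along the cokernel construction.'' After applying $k$-naturality you do obtain a horizontal morphism $Z_i\bbackslash\bar Z_i \mrto Y_i\bbackslash\bar Y_i$, but the square relating $\bar Z_{i+1}\mrto Z_i\bbackslash\bar Z_i$ to $\bar Y_{i+1}\mrto Y_i\bbackslash\bar Y_i$ is a commutative square of \emph{horizontal} morphisms only. Taking cokernels of parallel horizontal morphisms in such a square does not produce a horizontal morphism between the cokernels; it would require the square to be a horizontal pullback, which you have no reason to expect. (In the abelian analogue: a commuting square of monos need not induce a mono on cokernels.) The paper's route avoids this entirely: it embeds each leg of the span into a short exact sequence of chain complexes, applies Theorem~\ref{thm:lesforhomology} to extract spans $H_i(X_\dotp)\elto\dotp\mrto H_i(Z_\dotp)$ and $H_i(Z_\dotp)\elto\dotp\mrto H_i(Y_\dotp)$ from the long exact sequences, and then composes these in $\bbA^\flat$. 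The apex of the resulting span is generally strictly smaller than $H_i(Z_\dotp)$.
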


Before we formally construct this span, consider the picture we drew in Example \ref{chainmapsets} of a morphism of chain complexes in $\FFinSet$. On the left and right in \eqref{eqn.inducedspan} are $H_i(X_\dotp)$ and $H_i(Y_\dotp)$ shaded in that picture.
\begin{equation}\label{eqn.inducedspan}
\raisebox{-.45\height}{\includegraphics[height=3cm]{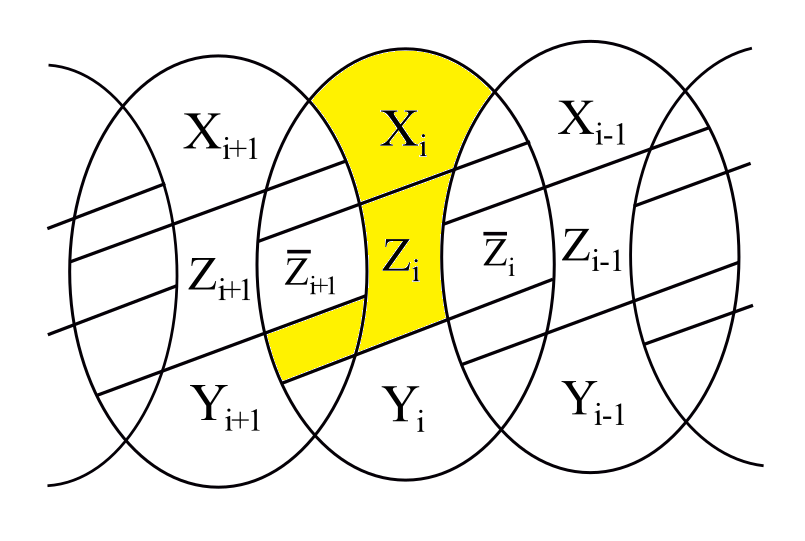}\qquad
\includegraphics[height=3cm]{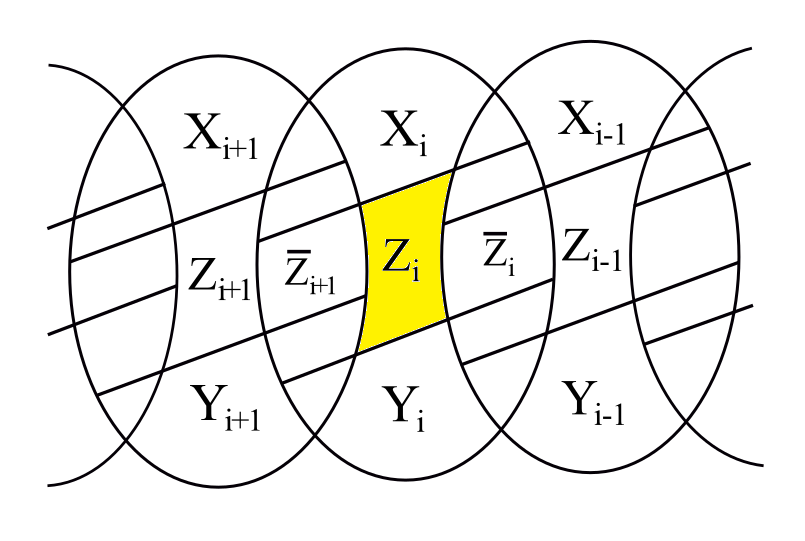}\qquad
\includegraphics[height=3cm]{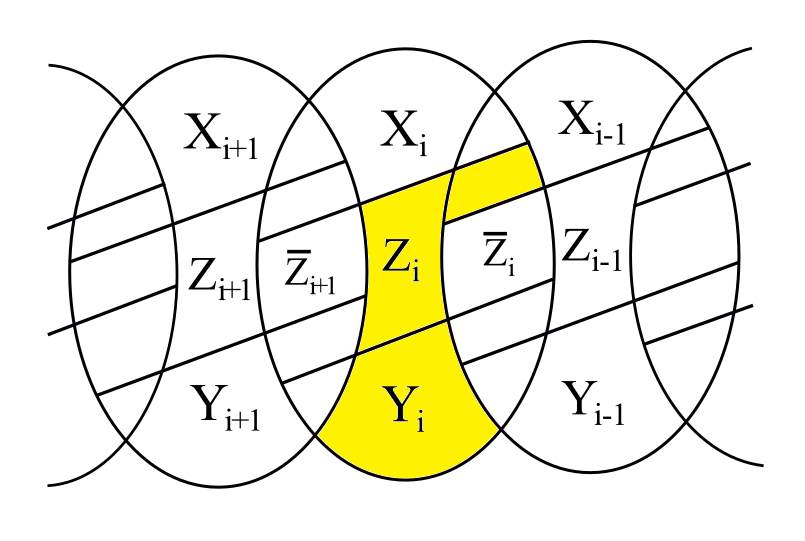}}
\end{equation}
Evidently from the pictures, these two ``intersect'' in the sense that $Z_i \backslash (\bar X_i \cup \bar Y_{i+1})$ includes into both and is the maximal subset of $Z_i$ to do so (in the center of \eqref{eqn.inducedspan}).\footnote{Due to the limitations of picture labeling, the center shaded region, while containing the label $Z_i$, does not make up all of $Z_i$ which includes all of the center oval between the two diagonal lines.} The proof then merely amounts to constructing this subset in the language of ACGW categories, which we can do using Theorem \ref{thm:lesforhomology}.

\begin{proof}
    A morphism of chain complexes of the form $X_\dotp \elto Z_\dotp \mrto Y_\dotp$ induces two short exact sequences of chain complexes,
    \[
    X_\dotp \bbackslash Z_\dotp \mrto X_\dotp \elto Z_\dotp 
    \qquad\qquad\textrm{and}\qquad\qquad 
    Z_\dotp \mrto Y_\dotp \elto Y_\dotp  \fforwardslash  Z_\dotp.
    \]
    By Theorem \ref{thm:lesforhomology}, these induce long exact sequences on homology which include, respectively, spans of the form
    \[
    H_i(X_\dotp) \elto \dotp \mrto H_i(Z_\dotp)
    \qquad\qquad\textrm{and}\qquad\qquad 
    H_i(Z_\dotp) \elto \dotp \mrto H_i(Y_\dotp).
    \]
    We can then compose these two spans in the category $\FFinSet^\flat$ (that is, take the mixed pullback of the two morphisms into $H_i(Z_\dotp)$) to get a span $H_i(X_\dotp) \elto \dotp \mrto H_i(Y_\dotp)$.
\end{proof}

\begin{example}
  It is not possible to restrict $H_i$ to a functor which lands in $\bbA$, even
  when restricting to horizontal (resp.\ vertical) morphisms in $\CCh_\bbA$.
 In other words, a horizontal chain morphism $X_\dotp\mrto Y_\dotp$ does
 \emph{not} necessarily induce a horizontal morphism $H_i(X_\dotp)\mrto
 H_i(Y_\dotp)$. For instance, if we consider the map between chain
 complexes in \eqref{eqn.spanex} of finite sets concentrated in degrees 1,2 and 3 
\begin{diagram-numbered}{eqn.spanex}
    {\emptyset & \emptyset  & \{a\} & \emptyset & \emptyset\\
    \{a\} & \{a\} & \{a,b\} & \{b\} & \{b\}\\};
    \eq{1-2}{1-1} \mto{1-2}{1-3} \eto{1-4}{1-3} \eq{1-4}{1-5}
    \eq{2-2}{2-1} \mto{2-2}{2-3} \eto{2-4}{2-3} \eq{2-4}{2-5}
    
    \mto{1-1}{2-1}
    \mto{1-2}{2-2}
    \mto{1-3}{2-3}
    \mto{1-4}{2-4} 
    \mto{1-5}{2-5} 

    \path[font=\scriptsize] (m-1-2) edge[-,white] node[pos=0.05]
  {\textcolor{black}{$\llcorner$}}  (m-2-1);
  \path[font=\scriptsize] (m-1-4) edge[-,white] node[pos=0.05]
  {\textcolor{black}{$\llcorner$}}  (m-2-3);
    \end{diagram-numbered}
    we see that $H_2(X_\dotp)=\{a\}$ while $H_2(Y_\dotp)=\emptyset$.  Thus there
    is not a horizontal morphism $H_2(X_\dotp) \mrto H_2(Y_\dotp)$, even though
    there is a morphism in $\Ch_{\FFinSet}^\flat$.
\end{example}

\begin{theorem}\label{thm:functorialhomology}
    Homology is a functor $\Ch_\bbA \rto \bbA^\flat$.
\end{theorem}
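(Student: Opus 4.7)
The plan is to verify that the assignment on morphisms from Proposition \ref{prop:homologyonmaps} preserves identities and composition, so that $H_i$ lifts to a functor $\Ch_\bbA \rto \bbA^\flat$.

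For identities: the identity on $X_\dotp$ in $\Ch_\bbA \cong (\CCh_\bbA)^\flat$ is represented by the span $X_\dotp \elto X_\dotp \mrto X_\dotp$. The construction of Proposition \ref{prop:homologyonmaps} then applies to the two degenerate short exact sequences $\varnothing \mrto X_\dotp \elto X_\dotp$ and $X_\dotp \mrto X_\dotp \elto \varnothing$. Tracing through the proof of Theorem \ref{thm:lesforhomology}, the snake diagram for each such degenerate sequence collapses so that the induced span is the identity on $H_i(X_\dotp)$, and composing two identity spans in $\bbA^\flat$ yields the identity again.

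For composition, I would use that every morphism in $\Ch_\bbA$ factors as a vertical morphism of $\CCh_\bbA$ followed by a horizontal one, with composition in $\Ch_\bbA$ computed via the $\oslash$ construction. It therefore suffices to verify: (i) $H_i$ preserves composition of consecutive horizontal morphisms $X_\dotp \mrto Y_\dotp \mrto W_\dotp$; (ii) the dual statement for consecutive vertical morphisms; and (iii) $H_i$ sends each pseudo-commutative square in $\CCh_\bbA$ to a commuting diagram of spans in $\bbA^\flat$, so that the two orderings of a vertical and a horizontal leg (interchanged via a pullback $\oslash$) produce canonically isomorphic intermediate terms.

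For (i), one assembles the three associated short exact sequences $X_\dotp \mrto Y_\dotp \elto Y_\dotp \fforwardslash X_\dotp$, $Y_\dotp \mrto W_\dotp \elto W_\dotp \fforwardslash Y_\dotp$, and $X_\dotp \mrto W_\dotp \elto W_\dotp \fforwardslash X_\dotp$ into a $3 \times 3$ grid whose third column is again short exact; naturality of the long exact sequence from Theorem \ref{thm:lesforhomology} with respect to this grid yields the required identification of composable spans in $\bbA^\flat$. Case (ii) is entirely dual, and (iii) reduces to the duality \eqref{eq:squareduality} between pseudo-commutative squares and pullbacks, which guarantees that the two $\oslash$-constructions produce canonically isomorphic intermediate objects with the same induced maps to the endpoints.

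The principal obstacle is the naturality argument in (i): the connecting spans produced by the long exact sequence are built via the strong snake lemma, which itself involves iterated complement operations, so the naturality verification requires assembling several large pseudo-commutative diagrams and repeatedly invoking \cite[Lemma 2.10]{CZ-cgw} to identify induced morphisms between kernels and cokernels. The bookkeeping is substantial, but no new conceptual ingredient beyond those already present in Theorems \ref{thm:snakelemma} and \ref{thm:lesforhomology} is needed.
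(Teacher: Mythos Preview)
Your overall reduction matches the paper's: both argue that since $H_i$ on a span is \emph{defined} as the composite of $H_i$ on its two legs, it suffices to check functoriality on horizontal morphisms (and dually on vertical ones). Your explicit isolation of condition (iii)---that $H_i$ carries a pseudo-commutative square in $\CCh_\bbA$ to a commuting square of spans---is a point the paper passes over quickly, and it is good that you flag it.

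Where you diverge is in how the horizontal-horizontal case (i) is actually verified. You propose to assemble a $3\times 3$ grid of short exact sequences and invoke naturality of the long exact sequence of Theorem~\ref{thm:lesforhomology}. That naturality, however, is never proved in the paper, and establishing it would require precisely the kind of iterated complement/kernel-cokernel bookkeeping you anticipate at the end of your proposal. The paper instead bypasses this entirely: given horizontal $f\colon X_\dotp\mrto Y_\dotp$ and $g\colon Y_\dotp\mrto Z_\dotp$, it further reduces to the subcase in which $H_i(f)$ happens to be a single horizontal arrow (i.e.\ $\bar X_{i+1}=\bar Y_{i+1}\times_{Y_i'}X_i'$) and $H_i(g)$ a single vertical arrow (i.e.\ $Y_i=Z_i$). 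In that subcase the comparison of $H_i(g)\circ H_i(f)$ with $H_i(g\circ f)$ collapses to checking that one explicit square is pseudo-commutative, which follows directly from the duality axiom~\eqref{eq:squareduality}. This is considerably shorter than proving LES naturality in full; your route is correct in principle but trades a concrete one-square check for a heavier lemma you would still have to supply.
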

\begin{proof}
    We have already defined homology on objects in Definition \ref{defn:homology}, and on morphisms in Proposition \ref{prop:homologyonmaps}. A careful study of the proof of Proposition \ref{prop:homologyonmaps} reveals that if the morphism of chain complexes we start with is the identity, then it will be mapped to the identity span on homology (i.e. the equivalence class of the span whose legs are both identity maps). It remains to show that this assignment respects composition.

    Moreover as morphisms of chain complexes $X_\dotp \to Y_\dotp$ factor as composites of the form $X_\dotp \elto Z_\dotp \mrto Y_\dotp$, and the span associated to a morphism in Proposition \ref{prop:homologyonmaps} is defined as the corresponding composite of spans in $\bbA^\flat$, it suffices to check that composition is respected for composites of only horizontal morphisms. The argument for vertical morphisms will be entirely dual.

    Let $f: X_\dotp \mrto Y_\dotp$ and $g: Y_\dotp \mrto Z_\dotp$ be horizontal morphisms in $\CCh_\bbA$; we will show that $H_i(g) \circ H_i(f) = H_i(g\circ f)$.    Write $X_i' = X_i \fforwardslash \bar X_i$, so that $H_i(X) = X_i' \bbackslash \bar X_{i+1}$ (and similarly for $H_i(Y)$ and $H_i(Z)$).  To check that composition is respected, it suffices to check that it is respected when $H_i(f)$ is a single horizontal morphism, and $H_i(g)$ is a single vertical morphism (instead of a general span).  The former happens when $\bar X_{i+1} = \bar Y_{i+1}\times_{Y_i'} X_i'$; the latter happens when $Y_i = Z_i$ (and thus $Y_i' = Z_i'$).  We therefore focus on this case.  In this case, the data of the two compositions reduces to the diagram in \eqref{eqn.spanfunctor}, where $H_i(g) \circ H_i(f)$ is the composition around the lower-left of the rightmost square, and $H_i(g\circ f)$ is the span around the upper-right of the rightmost square.
    \begin{diagram-numbered}{eqn.spanfunctor}
        { \bar Y_{i+1} \times_{Y_i} X_i' & \bar Z_{i+1} \times_{Z_i'} X_i' & X_i ' & H_i(X) & X_i \bbackslash (\bar Z_{i+1} \times_{Z_i} X_i') \\
        \bar Y_{i+1} & \bar Z_{i+1} & Z_i' & H_i(Y) & H_i(Z) \\};
        \mto{1-1}{1-2} \mto{1-2}{1-3} \eto{1-4}{1-3} \eto{1-5}{1-4}
        \mto{2-1}{2-2} \mto{2-2}{2-3} \eto{2-4}{2-3} \eto{2-5}{2-4}
        \mto{1-1}{2-1} \mto{1-2}{2-2} \mto{1-3}{2-3} \mto{1-4}{2-4} \mto{1-5}{2-5}
        \pbsym{1-2}{2-3} \comm{1-3}{2-4}
    \end{diagram-numbered}
    Thus it suffices to check that the rightmost square is pseudo-commutative.  
    The composition of the two squares on the left is a pullback square, and the composition of the two squares on the right is a pseudo-commutative square.  The statement that $H_i$ respects composition is then the statement that this implies that the rightmost square in the diagram is a pseudo-commutative square.  This is part of the naturality of the duality between pullback squares as on the left and pseudo-commutative squares as on the right;  on the left, the leftmost square must be a pullback square by a standard argument. 
\end{proof}

\subsection{Quasi-isomorphisms}\label{subsection:quasiiso}

Just like in the setting of abelian categories, we can use our notion of homology to define what it means for a morphism of chain complexes of sets to be a quasi-isomorphism. 

\begin{definition}
    A morphism $f_\dotp\colon X_\dotp\to Y_\dotp$ of chain complexes in an ACGW category $\bbA$ is a \emph{quasi-isomorphism} if it induces an isomorphism on homology; that is, if the associated span on homology is of the form \[H_i(X_\dotp)\elto^{\cong} \dotp \mrto^{\cong} H_i(Y_\dotp)\] for all $i$.
\end{definition}

\begin{example}
    If $\bbA$ arises from an abelian category, this agrees with the usual definition that the induced morphism on homology $H_i(X_\dotp) \to H_i(Y_\dotp)$ is an isomorphism for all $i$.
\end{example}

\begin{example}
    In $\FFinSet$, a quasi-isomorphism can be pictured as a morphism of chain complexes in which all three of the highlighted subsets in \eqref{eqn.inducedspan} agree. Unwinding this, the condition becomes that $X_i \backslash \left(\bar X_i \cup \bar X_{i+1} \cup Z_i\right)$, $Y_i \backslash \left(\bar Y_i \cup \bar Y_{i+1} \cup Z_i\right)$, $\bar Y_{i+1} \cap Z_i \backslash \bar Z_{i+1}$, and $\bar X_i \cap Z_i \backslash \bar Z_i$ (those subsets highlighted on the left in \eqref{eqn.setquasiiso}) are all empty. 
    \begin{equation}\label{eqn.setquasiiso}
    \raisebox{-.5\height}{\includegraphics[height=3.5cm]{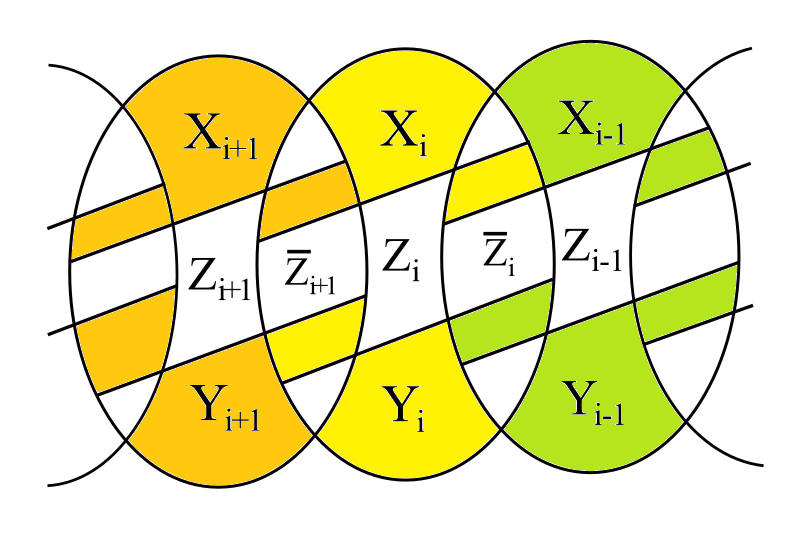}}
    \qquad\qquad
    \raisebox{-.5\height}{\includegraphics[height=3cm]{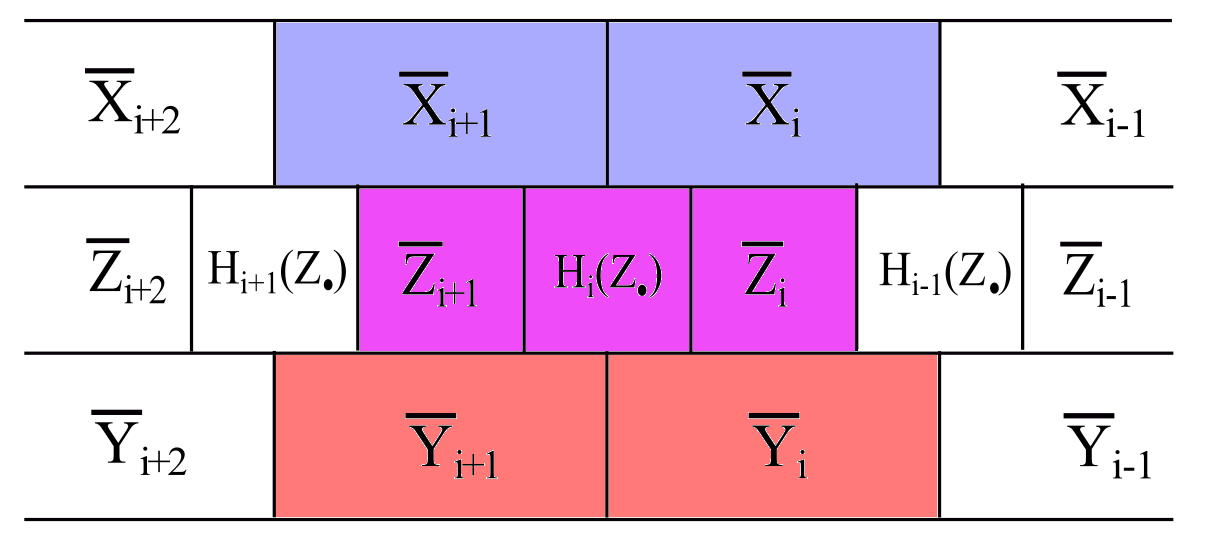}}
    \end{equation}
    
    A quasi-isomorphism can therefore be pictured as above right, where $X_i$ is shaded blue, $Y_i$ is shaded red, and $Z_i$ which includes into both is shaded purple.
\end{example}

Given  a morphism of chain complexes $f_\dotp \colon X_\dotp\to Y_\dotp$  in an abelian category, in some cases it is possible to characterize whether $f_\dotp$ is a quasi-isomorphism without comparing the homologies of $X_\dotp$ and  $Y_\dotp$. Namely, if we know that $f_\dotp$ is a monomorphism (respectively, epimorphism), then it is a quasi-isomorphism if and only if it's cokernel (resp.\ kernel) is an exact chain complex. This characterization generalizes to any ACGW category.

\begin{proposition}
    A horizontal morphism of chain complexes $f_\dotp \colon X_\dotp\mrto Y_\dotp$ is a quasi-isomorphism if and only if its cokernel is an exact chain complex. Likewise a vertical morphism of chain complexes is a quasi-isomorphism if and only if its kernel is exact.
\end{proposition}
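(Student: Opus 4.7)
The plan is to deduce both equivalences from Theorem \ref{thm:lesforhomology}. I will treat the horizontal case in detail; the vertical case is formally dual, obtained by swapping $\mrto$ with $\erto$ and $\fforwardslash$ with $\bbackslash$ throughout. Given $f_\dotp \colon X_\dotp \mrto Y_\dotp$, set $Z_\dotp \coloneqq Y_\dotp \fforwardslash X_\dotp$, so that $X_\dotp \mrto Y_\dotp \elto Z_\dotp$ is a short exact sequence of chain complexes in $\bbA$. Theorem \ref{thm:lesforhomology} then produces a long exact sequence whose three recurring types of differential are the spans $H_i(X_\dotp) \elto T^{XY}_i \mrto H_i(Y_\dotp)$ induced by $f_\dotp$, the spans $H_i(Y_\dotp) \elto T^{YZ}_i \mrto H_i(Z_\dotp)$ induced by the quotient, and the connecting spans $H_i(Z_\dotp) \elto T^{ZX}_i \mrto H_{i-1}(X_\dotp)$.

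Next I would unwind exactness of this long exact sequence at each spot. Using the characterization of exactness from Example \ref{ex:shortexactseq}, exactness at $H_i(X_\dotp)$, at $H_i(Y_\dotp)$, and at $H_i(Z_\dotp)$ amounts precisely to the statements that $T^{ZX}_{i+1} \mrto H_i(X_\dotp) \bbackslash T^{XY}_i$, $T^{XY}_i \mrto H_i(Y_\dotp) \bbackslash T^{YZ}_i$, and $T^{YZ}_i \mrto H_i(Z_\dotp) \bbackslash T^{ZX}_i$ are isomorphisms, respectively. From here the argument becomes a short game with two basic facts in an ACGW category: any horizontal or vertical monomorphism into $\varnothing$ has source $\varnothing$, and under the bijection $c, k$ between a morphism and its complement an isomorphism corresponds to the unique morphism from (or to) $\varnothing$.

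For the forward implication, assume $Z_\dotp$ is exact, so $H_i(Z_\dotp) = \varnothing$ for every $i$. The monics $T^{YZ}_i \mrto H_i(Z_\dotp)$ and $T^{ZX}_i \erto H_i(Z_\dotp)$ then force $T^{YZ}_i = \varnothing = T^{ZX}_i$, after which exactness at $H_i(X_\dotp)$ and $H_i(Y_\dotp)$ collapses the relevant complements and shows that both legs of the span $H_i(X_\dotp) \elto T^{XY}_i \mrto H_i(Y_\dotp)$ are isomorphisms. For the converse, assume $f_\dotp$ is a quasi-isomorphism, so both legs of each $T^{XY}_i$-span are isomorphisms; exactness at $H_i(Y_\dotp)$ and at $H_{i-1}(X_\dotp)$ then forces the complements $H_i(Y_\dotp) \bbackslash T^{YZ}_i$ and $H_{i-1}(X_\dotp) \bbackslash T^{XY}_{i-1}$ to be trivial, yielding $T^{YZ}_i = \varnothing = T^{ZX}_i$, and finally exactness at $H_i(Z_\dotp)$ forces $H_i(Z_\dotp) = \varnothing$. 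The main obstacle I anticipate is purely bookkeeping: keeping the horizontal versus vertical direction of each complement straight and confirming that the $c, k$ bijection really sends an isomorphism to the unique morphism out of $\varnothing$; once that is pinned down, every step is essentially immediate.
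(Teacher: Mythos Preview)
Your proposal is correct and follows exactly the paper's approach: apply Theorem~\ref{thm:lesforhomology} to the short exact sequence $X_\dotp \mrto Y_\dotp \elto Z_\dotp$ and read off both implications from exactness of the resulting long exact sequence, using that monics into $\varnothing$ are isomorphisms and that under the $c,k$ bijection an isomorphism corresponds to $\varnothing$. The only slip is in the converse direction, where exactness at $H_i(Y_\dotp)$ together with $T^{XY}_i \mrto H_i(Y_\dotp)$ being an isomorphism actually gives $H_i(Y_\dotp) \bbackslash T^{YZ}_i \cong H_i(Y_\dotp)$ (not $\varnothing$), and it is \emph{this} that forces $T^{YZ}_i \cong \varnothing$ via the $c,k$ bijection---precisely the bookkeeping hazard you anticipated.
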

\begin{proof}
    If we consider the short exact sequence of chain complexes \[X_\dotp \mrto^{f_\dotp} Y_\dotp \elto Y_\dotp\fforwardslash X_\dotp\] then by Theorem \ref{thm:lesforhomology} we get an induced long exact sequence as in \eqref{eqn.LESpairqproof}
    \begin{diagram-numbered}{eqn.LESpairqproof}
    { &[-20] \dotp &  & \dotp & & \dotp &[-15]   \\
    \cdots H_{i+1}(Y_\dotp \fforwardslash X_\dotp)  & & H_i(X_\dotp) & & H_i(Y_\dotp) & & H_i(Y_\dotp \fforwardslash X_\dotp) \cdots\\};
     \eto{1-2}{2-1}\mto{1-2}{2-3} \eto{1-4}{2-3} 
    \mto{1-4}{2-5} \eto{1-6}{2-5} \mto{1-6}{2-7}
    \end{diagram-numbered}

    Suppose that the chain complex $Y_\dotp\fforwardslash X_\dotp$ is exact. Then the long exact sequence above must be of the form in \eqref{eqn.ifexact}
    \begin{diagram-numbered}{eqn.ifexact}
    { & \varnothing &  & \dotp & & \varnothing & &   \\
    \cdots \varnothing  & & H_i(X_\dotp) & & H_i(Y_\dotp) & & \varnothing \cdots\\};
     \eq{1-2}{2-1}\mto{1-2}{2-3} \eto{1-4}{2-3} 
    \mto{1-4}{2-5} \eto{1-6}{2-5} \eq{1-6}{2-7}
    \end{diagram-numbered} 
    as any morphism into $\varnothing$ is an isomorphism. 
    
    As this sequence is exact, the vertical morphism $H_i(X_\dotp) \elto \dotp$ must be an isomorphism as it is isomorphic to the cokernel of $\varnothing \mrto H_i(X_\dotp)$. Dually, the horizontal morphism $\dotp \mrto H_i(Y_\dotp)$ is an isomorphism as it is isomorphic to the kernel of $H_i(Y_\dotp) \elto \varnothing$. This shows that $f_\dotp$ is a quasi-isomorphism. 
    

    Conversely, suppose that $f_\dotp$ is a quasi-isomorphism. Then the long exact sequence is of the form in \eqref{eqn.ifqiso}. 
    \begin{diagram-numbered}{eqn.ifqiso}
    { &[-15] A_i &  & \dotp & & B_i &[-10] &[-10] A_{i-1}  \\
      H_{i+1}(Y_\dotp \fforwardslash X_\dotp)  & & H_i(X_\dotp) & & H_i(Y_\dotp) & & H_i(Y_\dotp \fforwardslash X_\dotp) \\};
     \eto{1-2}{2-1}\mto{1-2}{2-3} \eto{1-4}{2-3}_{\cong}
    \mto{1-4}{2-5}^{\cong} \eto{1-6}{2-5} \mto{1-6}{2-7} \eto{1-8}{2-7}_{\cong}
    \end{diagram-numbered}
    The previous argument then works in reverse, where as the kernel (resp. cokernel) of an isomorphism the object $A_i$ (resp. $B_i$) must be isomorphic to $\varnothing$ for all $i$. We then have the short exact sequence $\varnothing \mrto H_i(Y_\dotp \fforwardslash X_\dotp) \elto \varnothing$, which by the same principle implies that both maps are isomorphisms and hence $H_i(Y_\dotp \fforwardslash X_\dotp) \cong \varnothing$. This shows that $Y_\dotp \fforwardslash X_\dotp$ is an exact chain complex.

    The argument for a vertical morphism of chain complexes is entirely dual to this.
\end{proof}
    

\begin{remark}
    If $f_\dotp\colon X_\dotp\to Y_\dotp$ is any morphism of chain complexes, which we can express as a span \[X_\dotp \elto^{f_\dotp^1} Z_\dotp \mrto^{f_\dotp^2} Y_\dotp,\] then the fact that $f_\dotp$ is a quasi-isomorphism does \emph{not} imply that both $f_\dotp^1$ and $f_\dotp^2$ are also quasi-isomorphisms. For instance, a counter-example is given by the span of chain complexes in \eqref{eqn.spanexq} concentrated in degrees 1, 2 and 3.
    \begin{diagram-numbered}{eqn.spanexq}
    { \{a\} & \{a\} & \{a,b\} & \{b\} & \{b\}\\
    \emptyset & \emptyset  & \{b\} & \emptyset & \emptyset\\
    \{b\} & \{b\} & \{b\} & \emptyset & \emptyset\\};
    \eq{1-2}{1-1} \mto{1-2}{1-3} \eto{1-4}{1-3} \eq{1-4}{1-5}
    \eq{2-2}{2-1} \mto{2-2}{2-3} \eto{2-4}{2-3} \eq{2-4}{2-5}
    \eq{3-2}{3-1} \eq{3-2}{3-3} \eto{3-4}{3-3} \eq{3-4}{3-5}
    
    \eto{2-1}{1-1}
    \eto{2-2}{1-2}
    \eto{2-3}{1-3}
    \eto{2-4}{1-4}
    \eto{2-5}{1-5}
    \mto{2-1}{3-1} \mto{2-2}{3-2} \mto{2-3}{3-3} \eq{2-4}{3-4} \eq{2-5}{3-5}

    \path[font=\scriptsize] (m-2-2) edge[-,white] node[pos=0.05]
  {\textcolor{black}{$\urcorner$}}  (m-1-3);
  \path[font=\scriptsize] (m-2-4) edge[-,white] node[pos=0.05]
  {\textcolor{black}{$\urcorner$}}  (m-1-5);
  \path[font=\scriptsize] (m-2-2) edge[-,white] node[pos=0.05]
  {\textcolor{black}{$\llcorner$}}  (m-3-1);
  \path[font=\scriptsize] (m-2-4) edge[-,white] node[pos=0.05]
  {\textcolor{black}{$\llcorner$}}  (m-3-2);
    \end{diagram-numbered}
\end{remark}

\begin{remark}
  In the case of finite sets, homology behaves more like chain complexes over a
  field, rather than chain complexes over a ring.  The most striking example of
  this phenomenon is that, over a field, every chain complex has a quasi-isomorphism
  to and from a chain complex in which all differentials are zero.  (This is sometimes
  stated as ``the chain complex is quasi-isomorphic to its homology.'')  This
  can be done quite simply: for each vector space in the complex, choose a basis
  in such a way that it restricts to a basis of the kernel of the next
  differential, and to the image of the previous.  Then a subset of the basis
  will give a basis for the homology, as well.  Note that there are two possible
  ways of constructing a quasi-isomorphism to the homology: one by mapping the
  homology into the chain complex via this basis, and one quotienting out by the
  basis elements that do not generate the homology.  (Over a ring it is generally not the case that this is possible; for a simple example consider the chain complex over $\mathbf{Z}$ which contains the single map $2: \mathbf{Z}/4 \rto \mathbf{Z}/4$.)

  In the case of sets this is done similarly, without the non-canonical step of
  choosing the basis.  The homology of the chain complex of sets includes
  naturally into the original chain complex, giving a quasi-isomorphism.  The
  choice of whether to make this inclusion vertical or horizontal is
  non-canonical, but it corresponds naturally to the choice of inclusion or
  quotient in the field case.  
\end{remark}

\bibliographystyle{alpha}
\bibliography{SSZ}

\end{document}